\documentclass[12pt]{amsart}
\usepackage[utf8]{inputenc}
\usepackage[english]{babel}
\usepackage{amsmath}
\usepackage{amsfonts}
\usepackage{amssymb}
\usepackage{enumerate}
\usepackage[normalem]{ulem}
\usepackage[hidelinks]{hyperref}
\usepackage{xcolor}

\usepackage{amsthm}
\newtheorem{theorem}{Theorem}[section]
\newtheorem{lemma}[theorem]{Lemma}

\newtheorem{question}[theorem]{Question}

\newtheorem{proposition}[theorem]{Proposition}

\theoremstyle{definition}
\newtheorem{definition}[theorem]{Definition}

\newcommand{\I}{\textbf{I}}

\begin{document}
	

	\title{Equivariant homotopy dense subsets in the realm of uniform $G$-ANR spaces}
	
		\author{Sergey A. Antonyan and  Luis A. Mart\'inez-Sánchez
		\\[3pt] }
	
	\address{Departamento de  Matem\'aticas,
		Facultad de Ciencias, Universidad Nacional Aut\'onoma de M\'exico,
		04510, Mexico City,  Mexico}
	\email{antonyan@unam.mx}
	
	
	\address{Departamento de  Matem\'aticas,
		Facultad de Ciencias, Universidad Nacional Aut\'onoma de M\'exico,
		04510, Mexico City,  Mexico}
	\email{luchomartinez9816@hotmail.com}

	\thanks {{\it 2020 Mathematics Subject Classification}. 54C55, 57S10, 22A26, 54E15,  54H15.}
	\thanks{{\it  Key words and phrases}. $G$-homotopy dense subset, $G$-UANR space, Lawson $G$-semilattice.}
	\thanks{The first author was supported  by grant IN-107426 from PAPIIT (UNAM) and the second author was supported by grant 829945 from SECIHTI (M\'exico)}
	\begin{abstract} 
	
		Let $G$ be a compact group. The existence of certain $G$-homotopy dense subsets in a metrizable $G$-space $X$ plays a fundamental role, as it is equivalent to $X$ being a $G$-ANR. From this perspective, the present paper develops several applications of this class of $G$-subsets.
		In particular, we prove that for a compact $G$-space $X$ and a metric space $Y$, the mapping space $C(X,Y)$ is a $G$-UA(N)R if and only if $Y$ is a UA(N)R in the sense of Michael. This result is significant because it enables the construction of examples of Lawson metric $G$-semilattices for which the property of being a $G$-UANR is equivalent to uniform local path-connectedness. Moreover, we show that this equivalence holds for every Lawson metric $G$-semilattice whenever $G$ is finite.
		Finally, we analyze the behavior of $G$-homotopy dense subsets when the ambient space is a $G$-A(N)R, thereby introducing the notion of a $G$-A(N)R-pair.
	\end{abstract}
	
	\maketitle\markboth{Sergey A. Antonyan and Luis A. Martínez-Sánchez}{Equivariant homotopy dense subsets}
	
	\section {Introduction}
	
	For a compact group $G$, a recent characterization of $G$-ANRs in terms of equivariant homotopy dense subsets was established in \cite{A8}. Specifically, a metrizable $G$-space $X$ is a $G$-ANR if and only if it contains a $G$-homotopy dense subset $A \subset X$ that is  itself  a $G$-ANR.
	
	This characterization highlights the fundamental role played by $G$-homotopy dense subsets in the equivariant theory of retracts. Motivated by this perspective, in the present paper we explore several applications and examples of such $G$-subsets, and we examine their behavior when the ambient space is a $G$-ANR or a $G$-AR.
	
	The applications developed here are inspired by the notion of a uniform absolute (neighborhood) retract, introduced by H. Toru\'nczyk \cite{HT2,HT3} under the name of a {\it regular absolute (neighborhood) retract}. Five years later,  these objects appeared in E. Michael \cite{EM1} under the name  of    \textit{uniform absolute (neighborhood) retract}, hencefoth abbreviated as UAR or UANR.
	
	\medskip
	
In this paper we continue studying the natural equivariant analogues of these objects, $G$-UARs and $G$-UANRs,  which were introduced  in \cite{A10}.	
	
\medskip
	
	The following result, established in \cite[Theorem 3.4]{KSakaiY}, is 
	of a particular relevance to the present work.
	
	\begin{theorem}\label{Sakai2005Teo3.4}
		\rm{Let $(S,d,\cdot)$ be a Lawson metric semilattice. Then the following conditions are equivalent:
			\begin{enumerate}
				\item [(i)] $S$ is a UANR,
				\item [(ii)] $S$ is uniformly locally contractible,
				\item [(iii)] $S$ is uniformly locally path-connected.
		\end{enumerate}}
	\end{theorem}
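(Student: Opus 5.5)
The chain I would prove is (i)$\Rightarrow$(ii)$\Rightarrow$(iii)$\Rightarrow$(i). The first two implications are general facts about metric spaces. The lattice structure is used only in the last one.

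For (i)$\Rightarrow$(ii), embed $S$ isometrically as a closed subset of a normed space $E$; the Kuratowski--Wojdys{\l}awski embedding into a space of bounded functions will do. Since $S$ is a UANR, there is a uniform neighborhood $U=\{x\in E: d(x,S)<\delta\}$ and a uniformly continuous retraction $r\colon U\to S$. Given $\varepsilon>0$, pick $\eta\le\delta/2$ so that $r$ moves points within $\varepsilon$ of each other whenever they are $\eta$-close. For any $x_0\in S$, the $\eta$-ball in $S$ around $x_0$ then contracts in the $\varepsilon$-ball via $(x,t)\mapsto r((1-t)x+tx_0)$. The segments stay in the convex $\eta$-ball of $E$, which lies inside $U$, so this gives uniform local contractibility. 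The step (ii)$\Rightarrow$(iii) is immediate: the track of a point under the contraction is a small path joining it to $x_0$.

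The main work is (iii)$\Rightarrow$(i). I would use Michael's characterization: a metric space is a UANR if and only if there is $\delta>0$ such that, uniformly, every partial realization of the nerve of a fine open cover extends to a full realization whose simplices have small diameter. In practice, this means building uniformly continuous ``convex-combination'' maps. In a Lawson semilattice, I would use the meet operation to replace convex combinations. The Lawson condition (small semilattices, i.e.\ uniformly continuous meet with a base of subsemilattice neighborhoods in the uniform sense) together with uniform local path-connectedness should give the following. For each $\varepsilon$ there is $\delta$ such that any two $\delta$-close points $x,y$ are joined by a path inside a small subsemilattice around them, and moreover $x\cdot y$ is $\varepsilon$-close to both.

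Concretely, I would define, for a finite set $\{x_0,\dots,x_n\}$ of mutually $\delta$-close points, a map from the simplex. At a barycentric point $t$ it takes the meet over $i$ of the points $\gamma_i(\cdot)$, where $\gamma_i$ is a small path from $x_i$ down to $x_0\cdots x_n$, run with time parameter governed by $t_i$. I would arrange this so that a vertex maps to $x_i$ and a face maps to the realization of that face. The meet keeps this continuous and keeps its diameter controlled by uniform continuity of the operation.

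From this I would obtain an equi-uniformly-continuous family of realizations. I would then apply Michael's criterion, or equivalently Toru\'nczyk's characterization via uniform homotopy domination by simplicial-type maps, to conclude that $S$ is a UANR. The hardest step is producing paths from $x_i$ to the meet that stay small uniformly. I would try first to take a path $\sigma$ from $x_i$ to $x_j$ and use $s\mapsto x_i\cdot\sigma(s)$, which starts at $x_i$ and ends at $x_i x_j$. Iterating this reaches the full meet. Uniform continuity of the multiplication, together with smallness of $\sigma$ and the Lawson condition, should bound the diameter. Checking the face compatibility of the resulting simplex map uniformly in the simplex dimension is the delicate point. Since only finitely many points (bounded by the dimension of the nerve, or handled by an inductive skeleton argument) are involved at each stage, I expect an induction on skeleta with a shrinking sequence of $\delta$'s to suffice.
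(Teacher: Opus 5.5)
The paper gives no proof of this statement; it quotes it from \cite{KSakaiY}. Your implications (i)$\Rightarrow$(ii)$\Rightarrow$(iii) are the standard ones and are fine, with one small repair. The Kuratowski image of $S$ is closed in $\ell_\infty(S)$ only when $S$ is complete, so run the argument inside $\operatorname{conv}(S)$, in which $S$ is closed (Wojdys{\l}awski). The segments $(1-t)x+tx_0$ stay there, and you only use uniform continuity of $r$ at $S$.

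The genuine gap is in (iii)$\Rightarrow$(i), and it is more basic than the face compatibility you flag: the simplex map you describe cannot take the prescribed vertex values. Every point of your paths $\gamma_i$, built from $s\mapsto x_i\cdot\sigma(s)$ and its iterates, lies below $x_i$ in the order $a\le b\iff ab=a$. Hence any meet $\prod_i\gamma_i(\cdot)$ is a common lower bound of $x_0,\dots,x_n$, so it is $\le m=x_0\cdots x_n$. Its value at the vertex $e_k$ can therefore equal $x_k$ only if $x_k=m$.

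Choosing other paths does not help. The meet is absorbing and has no neutral element, so at $e_k$ every factor with index $j\neq k$ would have to be $\ge x_k$. That requires upper bounds, which a semilattice need not have: in $(F(\mathbb{R}),\cup)$ two distinct singletons have none. The same absorption breaks faces, since a face would have to be realized by paths ending at its own meet rather than at $x_0\cdots x_n$.

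Your fallback, induction on skeleta with a shrinking sequence of $\delta$'s, cannot close this gap. The nerves in a Dugundji-type or partial-realization argument have unbounded dimension, so you need a single $\delta$ that works in every dimension. Also, the criterion you attribute to Michael is not stated precisely enough to be checked.

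The missing idea is to let the meet, not the paths, carry the higher dimensions. Because $d(ab,a'b')\le\max\{d(a,a'),d(b,b')\}$, every ball $B(p,\rho)$ is a subsemilattice, so meets of arbitrarily many of its points stay in it. One way to exploit this:
\begin{itemize}
\item Use small paths only on the $1$-skeleton, where $\partial\sigma$ is two points and paths are unavoidable.
\item Fill each simplex $\sigma$ with $\dim\sigma\ge 2$ by $\Phi(x)=\prod\Phi(C_x)$. Here $x\mapsto C_x$ is a fixed map from $\sigma$ into the finite subsets of $\partial\sigma$ (Hausdorff metric) with $C_y=\{y\}$ for $y\in\partial\sigma$. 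Such a map should exist because, for $\dim\sigma\ge 2$, this hyperspace of the connected polyhedron $\partial\sigma$ is contractible.
\end{itemize}
This filling reproduces the boundary values, so faces take care of themselves. It also keeps $\Phi(\sigma)$ in a ball containing $\Phi(\sigma^{(1)})$, independently of the dimension.

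The paper's own equivariant argument uses the meet the same way. By Claim~2 in the proof of Theorem~\ref{SakaiGfinito}, $\{s_1,\dots,s_n\}\mapsto s_1\cdots s_n$ is a $1$-Lipschitz retraction of $F(S)$ onto the copy $\{\{s\}:s\in S\}$ of $S$. So $S$ is a UANR as soon as that hyperspace is. If you go that way, make sure the hyperspace result you invoke is not itself derived from the theorem you are proving.
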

	
	A natural question to consider is whether the previous theorem remains valid in the context of Lawson metric $G$-semilattices (see Section \ref{sec4}). In Theorem~\ref{SakaiGfinito}, we provide an affirmative answer to this problem in certain cases, for instance, when the acting group $G$ is finite. However, this hypothesis is not necessary, as illustrated by a result from \cite{A10}, where it was shown that if $G$ is a compact group and $X$ is a metric $G$-space, then the equivalences hold for the hyperspace $2^X$ of nonempty compact subsets of $X$, when it is equipped with a suitable action.
	
	To provide further examples of Lawson metric $G$-semilattices for which these equivalences remain valid, we consider Question \ref{preguntaC(X,Y)GUANR} below. 
	
	We will denote by $C(X,Y)$ the space of continuous maps from a compact $G$-space $X$ to a metric space $Y$, endowed with the compact-open topology and equipped with the $G$-action defined by $(g\cdot f)(x)=f(g^{-1}x)$, where $g\in G$, $f\in C(X, Y)$ and $x\in X$.
	
	\begin{question}\label{preguntaC(X,Y)GUANR}
		\rm{Let $G$ be a compact group, $X$ a compact $G$-space, and $Y$ a UAR (respectively, a UANR). Is $C(X,Y)$ a $G$-UAR (respectively, a $G$-UANR)?}
	\end{question}
	
	Using several properties related to $G$-homotopy dense subsets, we give an affirmative answer to Question \ref{preguntaC(X,Y)GUANR} in Theorem \ref{functionC(X,Y)GUANR}. As a consequence, in Proposition \ref{EquivaC(X,S)equivariant} we present a variety of Lawson metric $G$-semilattices for which the equivalences of Theorem \ref{Sakai2005Teo3.4} hold without assuming that $G$ is finite.
	
	In the course of analyzing the behavior of $G$-homotopy dense subsets when the ambient space is a $G$-A(N)R, we introduce in Definition \ref{G-ANR-pair} the notion of a $G$-A(N)R-pair, thus unifying these concepts within a single framework. Several examples of such pairs are presented throughout Section \ref{sec3}.
		
	\section{Basic definitions}\label{sec2}
	
	Throughout this work, all topological spaces are assumed to be Hausdorff and all maps are assumed to be continuous. In addition, the letter $G$ will denote a Hausdorff topological group with identity element $e\in G$.
	
	For basic notions in the theory of $G$-spaces, we refer the reader to the monographs \cite{GB} and \cite{RP}. To make the exposition self-contained, we recall below several more specialized definitions and fundamental facts.
	
	A $G$-space is a topological space $X$ equipped with a fixed continuous action $G \times X\rightarrow X$ of $G$ on $X$. The image of a pair $(g, x) \in G\times X$ under this action will be denoted by $gx$. If $Y$ is another $G$-space, then a continuous map $f:X\rightarrow Y$ is called equivariant (or a $G$-map) if $f(gx) =gf(x)$ for every $x\in X$ and $g\in G$. A closed isometric $G$-embedding is a closed isometric embedding that is also a $G$-map. Similarly we define a $G$-retraction.
	
	Given a subgroup $H$ of $G$ and a subset $A\subset X$, the $H$-saturation of $A$ is defined as the set $H(A)= \{ha \mid h \in H, a \in A\}$. If $H(A)=A$, then $A$ is said to be an $H$-invariant subset, or simply an $H$-subset. In the particular case where $A$ is $G$-invariant, we simply say that $A$ is invariant. When $A=\{x\}$ for some $x\in X$, we write $H(x)$ instead of $H(\{x\})$; in this case, $H(x)$ is called the $H$-orbit of $x$.
	
	The $H$-orbit space $X/H$ is always equipped with the quotient topology induced by the $H$-orbit projection  $\pi: X\rightarrow X/H$, $\pi (x)=H(x)$ for all $x\in X$.
	
	For any $x\in X$, the stabilizer (or the stationary subgroup) of $x$ is defined by  $G_x=\{g\in G \mid  gx =x\}$. The $H$-fixed point set $X^H$ is defined as $\{x \in X \mid H\subset G_x\}$.
	
	If $X$ is a metrizable $G$-space, a compatible metric $d$ on  $X$ is called $G$-invariant (or simply invariant) provided that $d(gx, gy) =d(x, y)$ for all $g\in G$ and $x, y \in X$. By a metric $G$-space we mean a metric space $(X,d)$ equipped with an action of $G$ such that $d$ is invariant.
	
	When $G$ is compact, every metrizable $G$-space admits a compatible $G$-invariant metric (see, e.g., \cite[Proposition 1.1.12]{RP}). Furthermore, if $d$ is an invariant metric on a metrizable $G$-space $X$ and the orbit space $X/G$ is $T_1$, then the formula 
	\begin{center}
		$\widetilde{d}(G(x), G(y)) = \mbox{inf} \{d(\tilde{x},\tilde{y}) \mid \tilde{x} \in G(x), \tilde{y}\in G(y)\}$
	\end{center}
	defines a metric $\widetilde{d}$ on $X/G$ compatible with the quotient topology (see \cite[Proposition 1.1.12]{RP}). 
	
	By a linear $G$-space we mean a real topological vector space $L$ equipped with a continuous linear action of $G$; that is, $ g(\alpha x + \beta y) =\alpha(gx) +\beta(gy)$ for all $g\in G$, $x, y \in L$ and $\alpha, \beta \in \mathbb{R}$.
	
	Let $(Y,d)$ be a metric space and $X$ a topological space. The set $C(X, Y)$ of continuous maps from $X$ to $Y$ is endowed with the compact-open topology, namely the topology generated by all sets of the form
	
	\begin{center}
		$[K,V]=\{f\in C(X,Y)\mid f(K)\subset V\}$,
	\end{center}
	where $K\subset X$ is compact and $V\subset Y$ is open.
	
	In addition, when $X$ is compact, we consider on $C(X,Y)$ the metric $d^*(p,q)=\sup\{d(p(x),q(x)) \mid x\in X\}$. 
	
	If $X$ is a $G$-space, there is a natural action of $G$ on $C(X,Y)$ defined by $(g\cdot f)(x)=f(g^{-1}x)$. It is well known that this action is continuous whenever either $G$ or $X$ is locally compact (\cite[Proposition 4]{A3}).
	
	\medskip
	
	To introduce the notions of a $G$-UA(N)R and a $G$-UA(N)E, we first recall that a map $f:X\rightarrow Y$ from a metric space $(X,d)$ to a metric space $(Y,\rho)$ is called \textit{uniformly continuous at} a subset $A\subset X$ (see \cite{EM1}) if for each $\varepsilon>0$ there exists $\delta>0$ such that $\rho(f(x),f(a))<\varepsilon$ for all $x\in X$ and $a\in A$ with $d(x,a)<\delta$. Moreover, a neighborhood $U$ of $A$ in $X$ is said to be a \textit{uniform neighborhood} if there exists $\delta > 0$ such that $B_d(A,\delta) = \bigcup_{a \in A} B_d(a,\delta) \subset U$, where $B_d(a,\delta)$  denotes the open ball in $X$ centered at $a$ with radius $\delta$. 
		
	A closed $G$-subset $A$ of a metric $G$-space $X$ is called a uniform $G$-retract (respectively, a uniform neighborhood $G$-retract) of $X$ if there exists a $G$-retraction $r: X\rightarrow A$ (respectively, $r: U\rightarrow A$, for some uniform $G$-neighborhood $U$ of $A$ in $X$) that is uniformly continuous at $A$. In the special case $G=\{e\}$, this definition coincides with the notion of a uniform retract (respectively, a uniform neighborhood retract). In \cite{HT1,HT2}, such a retraction $r$ is referred to as a \textit{regular retraction}.
	
	A metric $G$-space $X$ is called a $G$-equivariant uniform absolute retract (respectively, a $G$-equivariant uniform absolute neighborhood retract), if for any closed isometric $G$-embedding $\iota: X\rightarrow Y$ in a metric $G$-space $Y$, the image $\iota(X)$ is a uniform $G$-retract (respectively, a uniform neighborhood $G$-retract) of $Y$. For brevity, we will refer to such a space $X$ as a $G$-UAR (respectively, a $G$-UANR). When $G=\{e\}$, these notions reduce to the classical concepts of UARs and UANRs introduced by Michael in \cite{EM1}.
	
	A metric $G$-space $Y$ is called a $G$-equivariant uniform absolute extensor (respectively, a $G$-equivariant uniform absolute neighborhood extensor) if, whenever $A$ is a closed $G$-subset of a metric $G$-space $X$ and $f:A \to Y$ is a uniformly continuous $G$-map, there exists a $G$-map $F : X \to Y$ (respectively, $F : U \to Y$, for some uniform $G$-neighborhood $U$ of $A$ in $X$) extending $f$ and uniformly continuous at $A$. In this case, $Y$ is called a $G$-UAE (respectively, a $G$-UANE).
	
	By omitting the word uniform from the definition of a $G$-UA(N)R, we obtain the notion of a $G$-A(N)R. Similarly, the notion of a $G$-A(N)E arises from that of a $G$-UA(N)E by suppressing all references to the metric structure. Note that $G$-A(N)Rs and $G$-A(N)Es are equivariant counterparts of the classical A(N)Rs and A(N)Es, respectively.
	
	When $G$ is compact, a metrizable $G$-space $Y$ is a $G$-A(N)R (respectively, a $G$-UA(N)R) if and only if $Y$ is a $G$-A(N)E (respectively, a $G$-UA(N)E) (see \cite{A3} and \cite{A10}). Throughout this paper, this fact will be used without further explicit mention.
	
	For additional background on the equivariant theory of retracts, we refer the reader to \cite{A2} and \cite{A5}.
		
	Let $X$ and $Y$ be $G$-spaces and let $p,q : X \to Y$ be $G$-maps. A homotopy $F : X \times I \to Y$ from $p$ to $q$ is called a $G$-homotopy if it is a $G$-map, where $X \times I$ is equipped with the diagonal action $g \cdot (x,t) = (gx,t)$. In this case, $p$ and $q$ are said to be $G$-homotopic, and we write $p \simeq_G q$.
	
	Throughout this work, for each $t \in I$ we denote by $F_t : X \rightarrow Y$ the map defined by $F_t(x) = F(x, t)$.
	
	A $G$-map $r:X\to Y$ is a $G$-homotopy equivalence if there exists a $G$-map $s: Y\to X$ such that $s\circ r\simeq_G\text{id}_X$ and $r\circ s\simeq_G\text{id}_Y$. In this case, we say that $X$ and $Y$ are $G$-homotopy equivalent. Here $\text{id}_X$ denotes the identity map on $X$.
	
	If $X\subset Y$ and there exists a $G$-homotopy $F: X\times I\to Y$ such that $F_0$ is the inclusion map $X\hookrightarrow Y$ and $F_1$ is a constant map, then $X$ is said to be $G$-contractible in $Y$. In particular, if $Y=X$, then $X$ is called $G$-contractible.
	
	A metric $G$-space $(X,d)$ is called $G$-uniformly locally contractible (abbreviated $G$-ULC) if for each $\varepsilon>0$, there exists $0<\delta<\varepsilon$ such that the ball $B_d(x,\delta)$ is $G_x$-contractible in $B_d(x,\varepsilon)$ for every $x\in X$. If, in addition, $X$ is a $G$-ANR, then $X$ is said to be a weak $G$-UANR. When $G = \{e\}$, we recover the notion of a weak UANR (see \cite{EM1}).

	In what follows, we will need the following result:
	
	\begin{theorem}[\cite{A10}]\label{lemautil} \rm{
		Let $G$ be a compact group. Then:
		
		\begin{enumerate}
			\item[(i)] Every $G$-UA(N)R is both a $G$-A(N)R and a UA(N)R.
			\item[(ii)] Every $G$-UANR is $G$-ULC and hence a weak $G$-UANR.
		\end{enumerate}}
	\end{theorem}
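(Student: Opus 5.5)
The plan is to reduce everything to one structural fact: every metric $G$-space $(X,d)$, with $G$ compact, admits a closed isometric $G$-embedding into a normed linear $G$-space $L$ with a linear isometric action. I would get this from the Arens--Eells linear space $AE(X)$ with $G$ acting by $g\cdot\sum\lambda_i x_i=\sum\lambda_i (gx_i)$. Invariance of $d$ makes this action isometric. Continuity of the action follows from compactness of $G$, checking first on finite sums and then using that the action is isometric. I also need the equivariant Dugundji theorem: a convex $G$-subset of a normed linear $G$-space, with $G$ compact, is a $G$-AE for metrizable $G$-spaces.

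For (i), suppose $X$ is a $G$-UANR and identify $X$ with its image in $L$. By definition there is a uniform $G$-neighborhood $U=B(X,\delta_0)$ of $X$ in $L$ and a $G$-retraction $r\colon U\to X$ that is uniformly continuous at $X$.
\begin{itemize}
\item \emph{$G$-ANR.} Since $L$ is a $G$-AE, $X$ is a $G$-neighborhood retract of a $G$-ANE, hence a $G$-ANE. For compact $G$ this is the same as a $G$-ANR.
\item \emph{UANR.} Forgetting the action, $r$ exhibits $X$ as a uniform neighborhood retract of the normed space $L$. Michael's characterization in \cite{EM1} (a metric space is a UANR iff it is a uniform neighborhood retract of some normed space) then gives that $X$ is a UANR.
\item \emph{The AR case.} Here $U=L$ and $r$ is a global retraction. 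The same argument, with $L$ a $G$-AE and Michael's UAR characterization, gives $G$-AR and UAR.
\end{itemize}
I avoid arguing directly from arbitrary non-equivariant embeddings, since naive "induced" constructions such as $G\times Z$ fail to be uniformly continuous.

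For (ii), keep the setup $X\subset L$ with $r\colon B(X,\delta_0)\to X$. Let $\varepsilon>0$. By uniform continuity of $r$ at $X$, choose $\eta>0$ with $d(r(z),r(a))<\varepsilon$ whenever $a\in X$ and $\|z-a\|<\eta$. Set $\delta=\min(\eta,\delta_0,\varepsilon)/2$. For $x\in X$, define $H\colon B_d(x,\delta)\times I\to X$ by
\[
H(y,t)=r\bigl((1-t)y+tx\bigr).
\]
\begin{itemize}
\item \emph{Well defined.} The segment from $y$ to $x$ lies in the ball $B_L(x,\delta)$, which is contained in $U$.
\item \emph{Stays in the $\varepsilon$-ball.} Since $\|(1-t)y+tx-x\|<\eta$ and $r(x)=x$, we get $d(H(y,t),x)<\varepsilon$.
\item \emph{Endpoints.} $H_0$ is the inclusion because $r|_X=\mathrm{id}$, and $H_1$ is constant at $x$.
\item \emph{$G_x$-equivariance.} For $g\in G_x$, linearity of the action and $gx=x$ give $g\bigl((1-t)y+tx\bigr)=(1-t)gy+tx$. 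Equivariance of $r$ then gives $H(gy,t)=gH(y,t)$, and $B_d(x,\delta)$ is $G_x$-invariant because $d$ is invariant.
\end{itemize}
Thus $B_d(x,\delta)$ is $G_x$-contractible in $B_d(x,\varepsilon)$, so $X$ is $G$-ULC. Combined with the $G$-ANR property from (i), $X$ is a weak $G$-UANR.

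The main obstacle is the first step: producing the closed isometric equivariant linear embedding with a continuous action. The key points are closedness of $X$ in $AE(X)$ (or in its completion) and continuity of $G\times L\to L$. After that, (i) and (ii) are formal consequences of Michael's normed-space characterization and the straight-line homotopy.
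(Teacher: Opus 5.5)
\textbf{Gap: the claim that the Arens--Eells space $L$ is a $G$-AE.} You justify this by the equivariant Dugundji theorem. For an arbitrary compact $G$, that theorem rests on averaging a non-equivariant extension, $z\mapsto\int_G g^{-1}F(gz)\,dg$. This integral need not exist in $L$ unless $L$ is complete, which is why Lemma~\ref{UAE grupo compacto no Lie} is stated only for closed convex subsets of Banach $G$-spaces. Your $L=AE(X)$ has to be the incomplete span: in the completion, $X$ is closed only when $X$ is complete. So you are outside the case where the theorem is available.

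In fact the general statement you quote can fail when $G$ is not a Lie group. Let $G=\mathbb{Z}_2^{\mathbb{N}}$ act on $L^2(G)$ by translations. Take $v=\sum_S c_S\chi_S$ with every Walsh coefficient $c_S\neq 0$, and let $V$ be the linear span of $G(v)$. Set $N_n=\{h\in G\mid h_1=\dots=h_n=0\}$.
\begin{itemize}
\item $V^{N_n}=\{0\}$. If $w=\sum_j a_j g_jv$ is $N_n$-fixed, then, since all $c_S\neq 0$, the finitely supported measure $\sum_j a_j\delta_{g_j}$ is $N_n$-invariant. As $N_n$ is infinite, this measure is zero, so $w=0$.
\item The truncations $v_n=\sum_{S\subset\{1,\dots,n\}}c_S\chi_S$ satisfy $G_{v_n}=N_n$ and $v_n\to v$.
\end{itemize}
Now take the compact $G$-space $G(v)\cup\bigcup_n G(v_n)$. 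An equivariant extension of the inclusion $G(v)\to V$ over any neighborhood of $G(v)$ would have to send $v_n$ into $V^{N_n}=\{0\}$ for large $n$, yet converge to $v\neq 0$. So no such extension exists, and $V$ is not even a $G$-ANE. Consequently, as written, your derivation of the $G$-A(N)R half of (i), and with it the ``weak $G$-UANR'' clause of (ii), is unsupported.

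\textbf{The conclusion still holds, and the repair is short.} Any of the following works.
\begin{itemize}
\item Under the paper's own convention (a $G$-A(N)R is obtained by deleting ``uniform''), a uniform neighborhood $G$-retraction already is a neighborhood $G$-retraction, so there is nothing to prove.
\item For the topological notion, take a closed $G$-embedding $X\subset Z$ into a metrizable $G$-space. Extend $d$ to an admissible metric $\rho$ on $Z$ (Hausdorff) and pass to $\rho_G(z,w)=\sup_{g\in G}\rho(gz,gw)$. This metric is invariant, admissible because $G$ is compact, and still equal to $d$ on $X$. The $G$-UA(N)R hypothesis then applies directly to $X\subset(Z,\rho_G)$.
\item Alternatively, embed $X$ into $C(G,E)$ via $x\mapsto(g\mapsto g^{-1}x)$, where $E$ is any normed space containing $X$ isometrically as a closed set. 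This is a closed isometric $G$-embedding into a normed linear $G$-space. It also avoids the $G$-fixed base point that your formula $g\cdot\sum\lambda_ix_i=\sum\lambda_i(gx_i)$ tacitly needs in the Arens--Eells construction. Moreover, $C(G,E)$ is a $G$-AE with no integration at all: extend $a\mapsto f(a)(e)$ to $F_0\colon Z\to E$ by Dugundji and set $F(z)(g)=F_0(g^{-1}z)$.
\end{itemize}

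\textbf{The rest of your argument is correct.} The UA(N)R conclusion via Michael's normed-space characterization uses only the non-equivariant Dugundji theorem, which needs no completeness. The straight-line homotopy $r\bigl((1-t)y+tx\bigr)$ is exactly the right $G_x$-contraction for $G$-ULC. Note that the paper only quotes this theorem from \cite{A10}, so there is no in-paper proof to compare with.
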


	\medskip

	\section{\texorpdfstring{$G$-}-ANR-pairs}\label{sec3}
	Recall that a $G$-subset $Z$ of a $G$-space $X$ is called $G$-homotopy dense in $X$ if there exists a $G$-homotopy $F:X\times I\rightarrow X$ such that $F_0=\mbox{id}_X$ and $F(X\times (0,1])\subset Z$ (see \cite{A8}). 
	
	For example, consider the Euclidean space $\mathbb{R}^n$ endowed with the canonical action of the orthogonal group $O(n)$, and let $\mathbb{B}^n$ denote the closed unit ball. We show that $\operatorname{int}(\mathbb{B}^n)$ is $O(n)$-homotopy dense in $\mathbb{B}^n$. To this end, define a map $F\colon \mathbb{B}^n\times I\to \mathbb{B}^n$ by
	\begin{center}
		$F(x,t)=(1-t)x,\quad$ $x\in \mathbb{B}^n,\quad$ $t\in I$.
	\end{center}
	
	Clearly, $F_0=\operatorname{id}_{\mathbb{B}^n}$ and $F(\mathbb{B}^n\times (0,1])\subset \operatorname{int}(\mathbb{B}^n)$. Therefore, $\operatorname{int}(\mathbb{B}^n)$ is $O(n)$-homotopy dense in $\mathbb{B}^n$.
	
	Now consider the Hilbert cube $Q=[-1,1]^{\infty}$ equipped with the canonical $\mathbb{Z}_2$-action. By Proposition~\ref{producthomotopydense}, it follows that $(-1,1)^{\infty}$ is $\mathbb{Z}_2$-homotopy dense in $Q$.

	\begin{proposition}\label{producthomotopydense}
		\rm{Let $\{X_j\mid j\in J\}$ be a collection of $G$-spaces, and for each $j\in J$, let $A_j$ be a $G$-homotopy dense subset of $X_j$. Then $A=\prod_{j\in J}A_j$ is $G$-homotopy dense in the product $G$-space $X=\prod_{j\in J}X_j$.}
	\end{proposition}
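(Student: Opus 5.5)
The plan is to define the homotopy coordinatewise. For each $j\in J$, fix a $G$-homotopy $F^j\colon X_j\times I\to X_j$ with $F^j_0=\operatorname{id}_{X_j}$ and $F^j(X_j\times(0,1])\subset A_j$; these exist by the definition of $G$-homotopy density. Then set
\[
F\colon X\times I\to X,\qquad F\bigl((x_j)_{j\in J},t\bigr)=\bigl(F^j(x_j,t)\bigr)_{j\in J}.
\]
Here $X=\prod_{j}X_j$ carries the diagonal action $g(x_j)_j=(gx_j)_j$. Recall that this action is continuous because each coordinate of $(g,(x_j))\mapsto (gx_j)$ is the composite of a projection with the continuous action on $X_j$.

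Next I would verify the required properties in order.

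\emph{Continuity.} By the universal property of the product topology, it suffices that each coordinate $\operatorname{pr}_j\circ F$ is continuous. But $\operatorname{pr}_j\circ F=F^j\circ(\operatorname{pr}_j\times\operatorname{id}_I)$, a composite of continuous maps.

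\emph{Equivariance.} We have $F(g(x_j)_j,t)=(F^j(gx_j,t))_j=(gF^j(x_j,t))_j=gF((x_j)_j,t)$, since each $F^j$ is a $G$-map for the action $g(x,t)=(gx,t)$.

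\emph{Endpoints.} $F_0=\operatorname{id}_X$ because every $F^j_0$ is the identity. For $t\in(0,1]$, each coordinate $F^j(x_j,t)$ lies in $A_j$, so $F(X\times(0,1])\subset\prod_j A_j=A$.

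It remains to note that $A$ is a $G$-subset of $X$, which is immediate since each $A_j$ is invariant.

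I expect no real obstacle here. The only point needing care is the continuity of $F$ for an arbitrary (possibly infinite) index set $J$. This is handled by checking continuity coordinatewise, as above, rather than by any product-of-maps argument on $X\times I$ that would require $I$ to be repeated. The key observation is that $X\times I$ maps to each $X_j\times I$ via $\operatorname{pr}_j\times\operatorname{id}_I$, so a single copy of $I$ suffices.
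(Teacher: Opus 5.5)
Your proposal is correct and follows the paper's proof: the paper defines the same coordinatewise homotopy $F(x,t)=\big(F_j(\pi_j(x),t)\big)_{j\in J}$ and notes that $F_0=\mathrm{id}_X$ and $F(X\times(0,1])\subset A$. Your checks of continuity via $\operatorname{pr}_j\circ F=F^j\circ(\operatorname{pr}_j\times\mathrm{id}_I)$ and of equivariance fill in the details the paper dismisses with ``clearly''.
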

	
	\begin{proof}
		For each $j\in J$, let $F_j\colon X_j\times I\to X_j$ be a $G$-homotopy such that $F_j(X_j\times (0,1])\subset A_j$ and $F_j(x,0)=x$ for all $x\in X_j$.  Let $\pi_j\colon X\to X_j$ denote the projection map.
		
		Define a $G$-map $F\colon X\times I\to X$ by
		\begin{center}
			$F(x,t)=\big(F_j(\pi_j(x),t)\big)_{j\in J},\quad x\in X,\quad t\in I$.
		\end{center}  
		
		Clearly, $F_0=\text{id}_X$ and $F(X\times (0,1])\subset A$.  Hence, $A$ is $G$-homotopy dense in $X$, as desired.    
	\end{proof}
	
	Now we introduce the notion of a $G$-ANR-pair, which will allow us to present further examples of $G$-homotopy dense subsets.
	
	\begin{definition}\label{G-ANR-pair}
		Let $X$ be a metrizable $G$-space and $A$ a $G$-subset of $X$. The pair $(X,A)$ is called a $G$-ANR-pair, if for every metrizable $G$-space $Z$ and every closed $G$-subset $B \subset Z$, any $G$-map $f : B \to X$ extends to a $G$-map $F : U \to X$ for some $G$-neighborhood $U$ of $B$ in $Z$, with the additional property that $F(U \setminus B) \subset A$. If one can take $U = Z$, then $(X,A)$ is called a $G$-AR-pair.
	\end{definition}
	
	Observe that, when $G$ is the trivial group, Definition \ref{G-ANR-pair} reduces to the classical notion of an A(N)R-pair (see, e.g., \cite[p. 266]{VM2}). Moreover, it follows immediately that if $(X,A)$ is a $G$-AR-pair (respectively, a $G$-ANR-pair), then both $X$ and $A$ are $G$-ARs (respectively, $G$-ANRs).
	
	In Theorem \ref{G-pair and G-homotopy dense}, we establish a connection between $G$-ANR-pairs and $G$-homotopy dense subsets. 
	
	

	\begin{theorem}\label{G-pair and G-homotopy dense}
		\rm{Let $G$ be a compact group. Then $(X,A)$ is a $G$-AR-pair (respectively, a $G$-ANR-pair) if and only if $X$ is a $G$-AR (respectively, a $G$-ANR) and $A$ is $G$-homotopy dense in $X$.}
	\end{theorem}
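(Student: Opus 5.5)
The plan is to prove both directions directly, treating the AR and ANR cases in parallel. The only difference is whether an extension is defined on all of $Z$ or only on a $G$-neighborhood of $B$.

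\emph{Necessity.} Suppose $(X,A)$ is a $G$-A(N)R-pair. Taking $B=\emptyset$ shows nothing useful, so instead I would take $Z$ to be $X$ embedded as a closed invariant subset of some metrizable $G$-space; in fact, as already observed after Definition \ref{G-ANR-pair}, $X$ is a $G$-A(N)R. To see that $A$ is $G$-homotopy dense, consider the metrizable $G$-space $Z=X\times I$ with diagonal action and its closed $G$-subset $B=X\times\{0\}$. Apply the pair property to the $G$-map $f\colon B\to X$, $f(x,0)=x$. In the AR case this yields a $G$-map $F\colon X\times I\to X$ with $F_0=\mathrm{id}_X$ and $F(X\times(0,1])\subset A$, as required.

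In the ANR case we only get $F$ on a $G$-neighborhood $U$ of $X\times\{0\}$. I would then choose a continuous $G$-invariant function $\varphi\colon X\to(0,1]$ whose region under the graph lies in $U$. Such a function exists by a standard tube argument: put $\varphi_0(x)=\sup\{s\le 1 : \{x\}\times[0,s]\subset U\}$. Since $U$ may be replaced by the invariant open set $\bigcap_g gU$, which is still a neighborhood of the invariant set $X\times\{0\}$ by compactness of $G$, $\varphi_0$ is $G$-invariant and lower semicontinuous and positive. One then takes a continuous invariant minorant, obtained by averaging over Haar measure or by pulling back through $X/G$, which is metrizable. With $\varphi$ in hand, set $H(x,t)=F(x,t\varphi(x))$. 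This is a $G$-homotopy with $H_0=\mathrm{id}$ and $H(X\times(0,1])\subset A$.

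\emph{Sufficiency.} Suppose $X$ is a $G$-A(N)R and $H\colon X\times I\to X$ witnesses that $A$ is $G$-homotopy dense. Let $B\subset Z$ be closed and $G$-invariant, and let $f\colon B\to X$ be a $G$-map. Since $X$ is a $G$-A(N)E, $f$ extends to a $G$-map $\tilde F\colon U\to X$ on a $G$-neighborhood $U$ of $B$, with $U=Z$ in the AR case. Fix an invariant compatible metric on $Z$ and let $\lambda(z)=\min\{1,d(z,B)\}$, which is a $G$-invariant continuous function vanishing exactly on $B$. Define $F(z)=H(\tilde F(z),\lambda(z))$. This map is equivariant because $H$ and $\tilde F$ are and $\lambda$ is invariant. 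It extends $f$ since $H_0=\mathrm{id}$, and for $z\in U\setminus B$ we have $\lambda(z)>0$, so $F(z)\in A$.

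The main obstacle is the ANR case of necessity: producing the continuous invariant function $\varphi$ under the neighborhood $U$. I would handle it via the metric on $X/G$, choosing $\varphi$ so that its graph region lies inside an invariant open tube around $X\times\{0\}$, using paracompactness of the metrizable space $X/G$.
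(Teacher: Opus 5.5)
Your proof is correct and follows the paper's argument. Sufficiency is identical: the paper sets $R(u)=F\bigl(s(u),\sigma(u,B)\bigr)$ with an invariant metric $\sigma\le 1$. For necessity, the paper likewise extends $\mathrm{id}$ from $X\times\{0\}$ to a $G$-neighborhood $V\subset X\times I$ and rescales time by a positive invariant function. The only difference is how that function is obtained: the paper takes $\lambda(x)=\tfrac12\, d\bigl((x,0),(X\times I)\setminus V\bigr)$ for the invariant metric $d((x,t),(z,s))=\sigma(x,z)+|t-s|$. This $\lambda$ is automatically continuous, invariant and positive, and its subgraph lies in $V$. So your lower semicontinuous tube-height function $\varphi_0$ and the insertion or Haar-averaging step are unnecessary; if you keep that route, your minorant must be strictly below $\varphi_0$.
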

	
	\begin{proof} We prove the $G$-ANR case.
		
		Suppose that $(X,A)$ is a $G$-ANR-pair. Since  $X$ is a $G$-ANR, it remains to show that $A$ is $G$-homotopy dense in $X$.  Define a $G$-map $f: X\times \{0\}\rightarrow X$  by $f(x,0)=x$. By our hypothesis, there exist a $G$-neighborhood $V$ of $X\times \{0\}$ in $X\times I$, and a $G$-extension $p: V\rightarrow X$ such that $p\big(V\setminus (X\times \{0\})\big)\subset A$.
		
		Let $\sigma\leq 1$ be an admissible invariant metric on $X$. Then the function 
		
		\begin{center}
			$d:(X\times I)^2\rightarrow [0,\infty),\quad$ $d((x,t),(z,s))=\sigma(x,z)+|t-s|$
		\end{center}
		is an admissible invariant metric on $X\times I$.
		
		Next, consider the map $\lambda: X\rightarrow (0,1]$ defined by
		\begin{center}
			$\lambda(x)=\frac{1}{2}d((x,0), (X\times I)\setminus V),\quad x\in X$.
		\end{center} 
		
		Clearly, $\lambda$ is invariant	 and satisfies
		\begin{center}
			$\{(x,t)\in X\times I\mid t\leq \lambda(x)\}\subset V.$
		\end{center}
		
		We use the map $\lambda$ to define a $G$-map $K:X\times I\rightarrow X$ by 
		\begin{center}
			$K(x,t)=p(x,\lambda(x)t),\quad x\in X,\quad t\in I$.
		\end{center} 
		
		Observe that $K_0=\text{id}_X$ and $K(X\times (0,1])\subset A$, which shows that $A$ is $G$-homotopy dense in $X$.
		
		Conversely, suppose that $X$ is a $G$-ANR and that $A$ is $G$-homotopy dense in $X$. Then there is a $G$-homotopy $F: X\times I\rightarrow X$ such that $F_0=\mbox{id}_X$ and $F(X\times (0,1])\subset A$.
		
		Let $B$ be a closed $G$-subset of a metrizable $G$-space $Y$, and let $r : B \rightarrow X$ be a $G$-map. Since $X$ is a $G$-ANR, there exist a $G$-neighborhood $U$ of $B$ in $Y$ and a $G$-extension $s : U \to X$ of $r$.
		
		By the compactness of $G$, there is an invariant metric $\sigma$ on $Y$ such that $\sigma\leq 1$. Define a map $R: U\rightarrow X$ by $R(u)=F\big(s(u),\sigma(u, B)\big)$.
		
		Observe that $R$ is a $G$-extension of $r$. Moreover, if $u\in U\setminus B$, we have $R(u)=F\big(s(u),\sigma(u,B)\big)\in F(X\times (0,1])\subset A$. Therefore, $R(U \setminus B) \subset A$, which completes the proof.
	\end{proof}
	
	It follows from the examples given at the beginning of this section that $\big(\mathbb{B}^n, \operatorname{int}(\mathbb{B}^n)\big)$ is an $O(n)$-AR-pair, and that $\big(Q, (-1,1)^\infty\big)$ is a $\mathbb{Z}_2$-AR-pair.
	
	\begin{proposition}\label{ANR-pair + contrac = AR-pair}
		\rm{Let $G$ be a compact group and $(X,A)$ a $G$-ANR-pair. Then $(X,A)$ is a $G$-AR-pair if and only if $A$ is $G$-contractible.}
	\end{proposition}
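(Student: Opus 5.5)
By Theorem~\ref{G-pair and G-homotopy dense}, the hypothesis that $(X,A)$ is a $G$-ANR-pair means that $X$ is a $G$-ANR and $A$ is $G$-homotopy dense in $X$. The same theorem says that $(X,A)$ is a $G$-AR-pair if and only if $X$ is a $G$-AR and $A$ is $G$-homotopy dense in $X$. So the statement reduces to the following claim: under these hypotheses, $X$ is a $G$-AR if and only if $A$ is $G$-contractible.

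For the forward direction, suppose $(X,A)$ is a $G$-AR-pair. As remarked after Definition~\ref{G-ANR-pair}, $A$ is then a $G$-AR, so it is a $G$-AE. Pick an invariant point $a_0\in A$; one should exist, since a $G$-AE is nonempty and contains a fixed point (extend a map from the empty set or a point over a fixed point; alternatively, use that $G$-ARs have fixed points). Consider the $G$-map $A\times\{0,1\}\to A$ sending $(a,0)\mapsto a$ and $(a,1)\mapsto a_0$, defined on the closed $G$-subset $A\times\{0,1\}$ of the metrizable $G$-space $A\times I$. Extending it over $A\times I$ gives a $G$-contraction of $A$. The existence of a fixed point is the only subtle point, and I would quote it as standard: for compact $G$, $X^G\neq\emptyset$ for a $G$-AR.

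For the converse, suppose $A$ is $G$-contractible via a $G$-homotopy $H\colon A\times I\to A$ with $H_0=\mathrm{id}_A$ and $H_1\equiv a_0$. Note that $a_0$ is automatically $G$-fixed, since $H_1$ is equivariant. Let $F$ be the $G$-homotopy witnessing that $A$ is $G$-homotopy dense in $X$. I will show that $X$ is $G$-contractible, via the concatenation
\[
\Phi(x,t)=\begin{cases} F(x,2t), & 0\le t\le 1/2,\\ H\bigl(F(x,1),\,2t-1\bigr), & 1/2\le t\le 1.\end{cases}
\]
This is well defined because $F(x,1)\in A$, and it is continuous and equivariant. It starts at $\mathrm{id}_X$ and ends at the constant map $a_0$. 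Since $X$ is a $G$-ANR, we can then use the standard equivariant fact that a $G$-contractible $G$-ANR is a $G$-AR; this holds for compact $G$ via the equivariant homotopy extension theorem for $G$-ANEs.

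The main obstacle is this last step, because the paper has not stated it explicitly. If we cannot cite it, we prove it directly. Let $B\subset Z$ be closed and $f\colon B\to X$ a $G$-map, and extend $f$ to $s\colon U\to X$ on a $G$-neighborhood $U$ of $B$. Choose an invariant Urysohn function $\mu\colon Z\to I$ with $\mu=0$ on $B$ and $\mu=1$ off a smaller closed $G$-neighborhood inside $U$; invariance is obtained by averaging over $G$ or by using invariant metrics. Define $F'(z)=\Phi(s(z),\mu(z))$ on $U$ and $F'(z)=a_0$ off $U$. This gives a $G$-extension to all of $Z$, so $X$ is a $G$-AE, hence a $G$-AR. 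Theorem~\ref{G-pair and G-homotopy dense} then gives that $(X,A)$ is a $G$-AR-pair.
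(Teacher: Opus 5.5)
Your proposal is correct and follows essentially the paper's route. You concatenate the homotopy-density homotopy with the contraction of $A$ to show $X$ is $G$-contractible (the paper phrases this as a $G$-homotopy equivalence between $X$ and $A$), you use that a $G$-contractible $G$-ANR is a $G$-AR, and you conclude with Theorem~\ref{G-pair and G-homotopy dense}. The only differences are that you prove the step ``$G$-contractible $G$-ANR $\Rightarrow$ $G$-AR'' directly with an invariant Urysohn function, where the paper cites it from the literature, and that you spell out the necessity direction, which the paper calls evident.
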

	
	\begin{proof}
		
		Necessity is evident, so we only prove sufficiency. Suppose that $A$ is $G$-contractible. 
		
		We first observe that $X$ is $G$-homotopy equivalent to $A$, and hence $X$ is also $G$-contractible. Indeed, let $F: X\times I\rightarrow X$ be a $G$-homotopy such that $F_0=\text{id}_X$ and $F(X\times (0,1])\subset A$. 
		
		Define a $G$-map $p: X\rightarrow A$ by $p(x)=F_1(x)$, and let $q:A\rightarrow X$ denote the inclusion map. Then the restriction $F\restriction_{A\times I}: A\times I\rightarrow A$ is a $G$-homotopy from $\text{id}_A$ to $p\circ q$, and $F$ is a $G$-homotopy from $\text{id}_X$ to $q\circ p$. Therefore, $X$ and $A$ are $G$-homotopy equivalent.
		
		Since both $X$ and $A$ are $G$-contractible $G$-ANRs, it follows from \cite[Theorem 6]{A2} that $X$ and $A$ are $G$-ARs. Moreover, because $A$ is $G$-homotopy dense in $X$, Theorem \ref{G-pair and G-homotopy dense} implies that $(X,A)$ is a $G$-AR-pair. This completes the proof. 
	\end{proof}
	
	Let $G$ be a finite group and $L$ a locally convex linear $G$-space. Given a convex $G$-subset $V$ of $L$, we prove that $(\overline{V}, V)$ is a $G$-AR-pair, and, by Theorem \ref{G-pair and G-homotopy dense}, it follows that $V$ is $G$-homotopy dense in $\overline{V}$.
	
	Since $V$ is a UANR, it follows from \cite[Theorem 2]{KS1} that  $V$ is homotopy dense in $\overline{V}$. Furthermore, $\overline{V}$ is itself  a UANR. Hence, by Theorem \ref{G-pair and G-homotopy dense}, the pair $(\overline{V}, V)$ is an ANR-pair. Moreover, since $V$ is contractible, Proposition \ref{ANR-pair + contrac = AR-pair} yields that $(\overline{V}, V)$ is an AR-pair  (see also \cite[Corollary 4.2.15]{VM2}).
	
	Now, let $X$ be a metrizable $G$-space, $A$ a closed $G$-subset of $X$, and $f: A\rightarrow \overline{V}$  a $G$-map. Since $(\overline{V},V)$ is an AR-pair, there exists a continuous extension $k: X\rightarrow \overline{V}$ of $f$ such that $k(X\setminus A)\subset V$. 
	
	Define a map $F: X\rightarrow \overline{V}$ by 
	
	\begin{center}
		$F(x)=\dfrac{1}{|G|}\displaystyle\sum\limits_{g\in G}g^{-1}k(gx),\quad$ $x\in X$.
	\end{center}
	
	Clearly, $F$ is a $G$-extension of $f$. Moreover, for each $x\in X\setminus A$, the value $F(x)$ is a convex combination of elements of $V$, and hence $F(x)\in V$. Therefore, $F(X\setminus A)$ is contained in $V$, and consequently $(\overline{V},V)$ is a $G$-AR-pair. We thus obtain the following result.
	
	\begin{proposition}\label{ConvexARpairfi}
		\rm{Let $G$ be a finite group and $L$ a locally convex linear $G$-space. If $V$ is a convex $G$-subset of $L$, then $V$ is $G$-homotopy dense in $\overline{V}.$}
	\end{proposition}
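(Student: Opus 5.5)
The argument is the one sketched in the paragraphs just before the statement. We first handle the non-equivariant pair $(\overline{V},V)$, then average over $G$ to make it equivariant, and finally invoke Theorem~\ref{G-pair and G-homotopy dense} (whose necessity direction needs only the $G$-AR-pair property, not metrizability subtleties about $\overline{V}$ beyond what is assumed). A more direct alternative is available: if $V$ contains a $G$-fixed point $v_0$, the straight-line homotopy toward $v_0$ already works. Averaging any point of $V$ over the finite group $G$ produces such a fixed point, $v_0=\frac{1}{|G|}\sum_{g\in G}gv$, which lies in $V$ by convexity and $G$-invariance. I would present the AR-pair route as the main proof, matching the paper, and note the direct homotopy as a check.

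\textbf{Step 1.} Show that $V$ is homotopy dense in $\overline{V}$ without any group action. Here we use that a convex subset of a locally convex (metrizable) linear space is a UANR. By \cite[Theorem 2]{KS1}, a UANR is homotopy dense in its completion/closure, and $\overline{V}$ is a UANR, hence an ANR. Theorem~\ref{G-pair and G-homotopy dense} with $G=\{e\}$ then makes $(\overline{V},V)$ an ANR-pair. Since $V$ is convex, it is contractible, so Proposition~\ref{ANR-pair + contrac = AR-pair} with trivial $G$ upgrades this to an AR-pair.

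\textbf{Step 2.} Make the pair equivariant by averaging. Given a closed $G$-subset $A$ of a metrizable $G$-space $X$ and a $G$-map $f\colon A\to\overline{V}$, take a non-equivariant extension $k$ with $k(X\setminus A)\subset V$, and set $F(x)=\frac{1}{|G|}\sum_{g} g^{-1}k(gx)$. Three things must be checked:
\begin{enumerate}
\item[(a)] $F$ is equivariant. This is a reindexing of the sum.
\item[(b)] $F$ extends $f$. For $a\in A$, equivariance of $f$ gives $g^{-1}f(ga)=f(a)$, so every term equals $f(a)$.
\item[(c)] $F(X\setminus A)\subset V$. Since $A$ is invariant, $x\notin A$ implies $gx\notin A$, so each $k(gx)\in V$. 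Then $g^{-1}k(gx)\in V$ by invariance of $V$, and the average lies in $V$ by convexity.
\end{enumerate}
We also need $F$ to land in $\overline{V}$ everywhere, which holds because $\overline{V}$ is convex and $G$-invariant. Hence $(\overline{V},V)$ is a $G$-AR-pair.

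\textbf{Step 3.} Apply the necessity direction of Theorem~\ref{G-pair and G-homotopy dense} to conclude that $V$ is $G$-homotopy dense in $\overline{V}$.

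\textbf{Main obstacle.} The delicate point is Step 1: justifying that $V$ is a UANR and that $\overline{V}$ is an ANR in a general locally convex space, which needs metrizability of $L$. As a safeguard I would add the direct homotopy. With $v_0\in V$ a $G$-fixed point as above, define $H(x,t)=(1-t)x+tv_0$. This is a $G$-homotopy $\overline{V}\times I\to\overline{V}$ with $H_0=\mathrm{id}$. For $t>0$, the point $(1-t)x+tv_0$ with $x\in\overline{V}$ and $v_0\in V$ lies in $V$. This is the standard fact for convex sets whenever $v_0$ is an interior point relative to the affine hull. In general I would verify it through the same KS1 argument, so this remains the step requiring care.
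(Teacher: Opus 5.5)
Your Steps 1--3 are exactly the paper's argument. First, \cite[Theorem 2]{KS1} together with Theorem~\ref{G-pair and G-homotopy dense} and Proposition~\ref{ANR-pair + contrac = AR-pair} for the trivial group make $(\overline{V},V)$ an AR-pair. Next, averaging the extension $k$ over the finite group makes it a $G$-AR-pair. Finally, the necessity half of Theorem~\ref{G-pair and G-homotopy dense} gives $G$-homotopy density.

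Drop the ``direct alternative'' rather than offering it as a check, because the straight-line homotopy toward a fixed point $v_0\in V$ fails in general. Take $V$ to be the finitely supported sequences in $\ell^2$, any $v_0\in V$, and $x=(1/n)_n$. Then $(1-t)x+tv_0\notin V$ for $0<t<1$, even though every point of this $V$ lies in $\mathrm{ri}(V)$ in the paper's sense. The straight-line homotopy is only guaranteed to work when $v_0$ is interior to $V$ relative to the \emph{closed} affine hull. That is what the finite-dimensional proposition that follows exploits.
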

	
The following result shows that the finiteness assumption on $G$ in Proposition \ref{ConvexARpairfi} can be dropped whenever $L$ is a finite-dimensional normed linear $G$-space.

In the proof, we shall make use of the relative interior of a convex subset $V \subset L$, namely
	
	\begin{center}
		$\text{ri}(V)=\{x\in V\mid \exists\, \varepsilon>0\, \text{such that}\, B(x,\varepsilon)\cap \text{aff}(V)\subset V\}$,
	\end{center}
	where 
	
	\begin{center}
		$\text{aff}(V)=\left\{\displaystyle\sum_{i=1}^nt_ix_i\mid t_i\in\mathbb{R},\, x_i\in V,\, \sum_{i=1}^nt_i=1, \  n\in\mathbb{N}\right\}$
	\end{center}
	denotes the affine hull of $V$. We write $\operatorname{conv}(V)$ for the convex hull of $V$.
	
	\begin{proposition}
		\rm{Let $G$ be a compact group and let $V$ be a convex $G$-subset of a finite-dimensional normed linear $G$-space $L$. Then $(\overline{V},V)$ is a $G$-AR-pair.}
	\end{proposition}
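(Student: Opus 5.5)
The plan is to apply Theorem~\ref{G-pair and G-homotopy dense}. It suffices to prove two things: that $\overline{V}$ is a $G$-AR, and that $V$ is $G$-homotopy dense in $\overline{V}$. The case $V=\emptyset$ is trivial, so assume $V\neq\emptyset$.

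\textbf{Step 1: $\overline{V}$ is a $G$-AR.} Since the action is linear, $\overline{V}$ is a closed convex $G$-subset of the finite-dimensional (hence complete, metrizable, locally convex) linear $G$-space $L$. I will invoke the equivariant Dugundji theorem of Antonyan: for compact $G$, a convex $G$-subset of a locally convex metrizable linear $G$-space is a $G$-AE, hence a $G$-AR (see \cite{A2,A3}). If that citation needs adjusting, there is a fallback. By Step~3 below, $\overline{V}$ is $G$-contractible via a straight-line homotopy. Local $G$-contractibility also holds, since small convex neighbourhoods intersected with $\overline{V}$ can be made $G_x$-invariant by averaging the norm over Haar measure. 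One would then combine these with \cite[Theorem 6]{A2}. I expect the direct citation to suffice.

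\textbf{Step 2: a $G$-fixed point in $\operatorname{ri}(V)$.} Each $g\in G$ acts as a linear homeomorphism of $L$ with $gV=V$. Hence $g$ preserves $\operatorname{aff}(V)$, and it maps $\operatorname{ri}(V)$ onto itself, because relative interiors are preserved by affine homeomorphisms of $\operatorname{aff}(V)$. Since $L$ is finite-dimensional and $V\neq\emptyset$, $\operatorname{ri}(V)\neq\emptyset$.

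Pick $v\in\operatorname{ri}(V)$ and let $c=\int_G gv\,dg$, using normalized Haar measure. Then $c$ is $G$-fixed by invariance of Haar measure and linearity of the action. Moreover, $c$ lies in the closed convex hull of the compact orbit $G(v)\subset\operatorname{ri}(V)$. In finite dimensions, Carathéodory's theorem shows that the convex hull of a compact set is already compact, so $c\in\operatorname{conv}(G(v))$. Since $\operatorname{ri}(V)$ is convex, $c\in\operatorname{ri}(V)$. I expect this step to be the main point requiring care. In particular, it must be checked that the vector-valued integral lands in $\operatorname{conv}(G(v))$ and not merely in its closure; the Carathéodory argument handles this.

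\textbf{Step 3: the homotopy.} Define $F\colon \overline{V}\times I\to\overline{V}$ by
\begin{center}
$F(x,t)=(1-t)x+tc,\quad x\in\overline{V},\quad t\in I.$
\end{center}
Because $c$ is fixed and the action is linear, $F(gx,t)=gF(x,t)$, so $F$ is a $G$-homotopy, and clearly $F_0=\operatorname{id}_{\overline{V}}$. By the standard fact from convex analysis, if $c\in\operatorname{ri}(V)$ and $x\in\overline{V}$, then $(1-t)x+tc\in\operatorname{ri}(V)\subset V$ for every $t\in(0,1]$. Thus $F(\overline{V}\times(0,1])\subset V$, so $V$ is $G$-homotopy dense in $\overline{V}$.

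Combining Steps~1 and~3 with Theorem~\ref{G-pair and G-homotopy dense} shows that $(\overline{V},V)$ is a $G$-AR-pair.
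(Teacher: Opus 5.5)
Your proof is correct and is essentially the paper's argument: you Haar-average an orbit of a point of $\operatorname{ri}(V)$ to get a $G$-fixed point of $\operatorname{ri}(V)$. The integral lands in $\operatorname{conv}(G(w))\subset\operatorname{ri}(V)$ because that hull is compact in finite dimensions. You then contract $\overline{V}$ linearly onto this point and invoke Theorem~\ref{G-pair and G-homotopy dense} together with the fact that $\overline{V}$ is a $G$-AR (which the paper simply asserts). One small correction: the case $V=\emptyset$ is excluded rather than trivial, since $(\emptyset,\emptyset)$ fails the $G$-AR-pair definition (take $B=\emptyset$ in a nonempty $Z$), so you should simply assume $V\neq\emptyset$, as the paper tacitly does.
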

	
	\begin{proof}
		Since $\overline{V}$ is a $G$-AR, it suffices to show that $V$ is $G$-homotopy dense in $\overline{V}$ and then apply Theorem \ref{G-pair and G-homotopy dense}.
		
		As $L$ is finite dimensional, by \cite[Theorem 2.3.1]{RW}, there exists $w \in \mathrm{ri}(V)$. Since $V$ is invariant, its relative interior $\mathrm{ri}(V)$ is also invariant, and hence the orbit $G(w)$ is contained in $\mathrm{ri}(V)$. Moreover, by \cite[Theorem 2.3.5]{RW}, the set $\mathrm{ri}(V)$ is convex. Consequently, $\operatorname{conv}(G(w)) \subset \mathrm{ri}(V)$.
		
		Consider the map  $f: G\rightarrow \text{conv}(G(w))$ defined by $f(h)=hw$, $h\in G$. Since $L$ is finite-dimensional and $G(w)$ is a compact subset of $L$, it then follows from \cite[Theorem 3.20]{WR} that $\operatorname{conv}(G(w))$ is compact. Therefore, by \cite[Theorem 3.27]{WR}, the Haar integral 
		\[
		v = \int_G f
		\]
		is well defined and 	$v$	belongs to $\operatorname{conv}(G(w)) \subset \mathrm{ri}(V)$.
		
		We now define a map $H: \overline{V}\times I\rightarrow \overline{V}$ by $H(x,t)=(1-t)x+tv$. Since $v$ is a $G$-fixed point, i.e.,     $v \in L^G$, the map $H$ is $G$-equivariant. Clearly, $H_0 = \mathrm{id}_{\overline{V}}$, and by \cite[Theorem 2.3.4]{RW}, we have 
		\[
		H\big(\overline{V} \times (0,1]\big) \subset V.
		\]
		
		Thus, $V$ is $G$-homotopy dense in $\overline{V}$, and the conclusion follows from Theorem \ref{G-pair and G-homotopy dense}.
	\end{proof}
	
	In Theorems \ref{cone(A)GHDcone(X)}, \ref{GxA G-HD GxD}, \ref{condicionesGhomotopydense}, and~\ref{npower}, we study several examples of	$G$-ANR-pairs and their relationship with some classical functorial constructions.
	
	Let $X$ be a $G$-space. The cone of $X$, denoted by $\mathrm{Cone}(X)$, is the quotient set $(I\times X)/(\{0\}\times X)$ with quotient map $\eta:I\times X\rightarrow \mathrm{Cone}(X)$. For each $(t,x)\in I\times X$, we write $tx=\eta(t,x)$, and we denote by $\theta:=0x$ the vertex of $\mathrm{Cone}(X)$.
	
	Throughout, we equip $\mathrm{Cone}(X)$ with the weak topology. In this topology, a subset $U \subset \mathrm{Cone}(X)$ containing $\theta$ is open if and only if $\eta^{-1}(U)$ is open in $I\times X$ and there exists $\varepsilon > 0$ such that $[0,\varepsilon)\times X \subset \eta^{-1}(U)$. If $\theta \notin U$, then $U$ is open precisely when $\eta^{-1}(U)$ is open in $I\times X$. 
	
	Note that the weak topology is strictly coarser than the quotient topology induced by $\eta$, although the two coincide whenever $X$ is compact.
	
	Define a $G$-action on $\mathrm{Cone}(X)$ by
	
	\begin{center}
		$g\cdot tx=t(gx)\quad$ for all $g\in G$ and $tx\in$ $\mathrm{Cone}(X)$.
	\end{center}
	
	\begin{lemma}\label{lemmacone(A)GHDcone(X)}
		\rm{Let $G$ be a topological group, $X$ a $G$-space and $A$ a $G$-subset of $X$. If $A$ is $G$-homotopy dense in $X$, then $\mathrm{Cone}(A)$ is $G$-homotopy dense in $\mathrm{Cone}(X)$.}
	\end{lemma}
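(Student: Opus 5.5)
Let $F\colon X\times I\to X$ be a $G$-homotopy with $F_0=\mathrm{id}_X$ and $F(X\times(0,1])\subset A$. The natural candidate is to apply $F$ levelwise on the cone:
\[
\widetilde F\colon \mathrm{Cone}(X)\times I\to \mathrm{Cone}(X),\qquad \widetilde F(sx,t)=s\,F(x,t).
\]
This is well defined because at the vertex $s=0$ the value is $\theta$ independently of $x$. Equivariance holds since $F$ is a $G$-map: $\widetilde F(g\cdot sx,t)=s\,F(gx,t)=s\,gF(x,t)=g\cdot\widetilde F(sx,t)$. We also have $\widetilde F_0=\mathrm{id}$. For $t>0$, a point $sx$ with $s>0$ goes to $sF(x,t)\in\mathrm{Cone}(A)$, and $\theta$ goes to $\theta\in\mathrm{Cone}(A)$. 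Here $\mathrm{Cone}(A)$ is regarded as the $G$-subset $\eta(I\times A)$ of $\mathrm{Cone}(X)$, which contains the vertex. So $\widetilde F(\mathrm{Cone}(X)\times(0,1])\subset\mathrm{Cone}(A)$.

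The main step is continuity of $\widetilde F$ with respect to the weak topology, which is not the quotient topology. I would split into two cases.

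\emph{Points $(sx,t)$ with $s>0$.} The set $\eta((0,1]\times X)$ is open, and on it $\eta$ restricts to a homeomorphism from $(0,1]\times X$. To see this, note that a set not containing $\theta$ is open iff its preimage is open, so $\eta$ is open on $(0,1]\times X$. On this piece $\widetilde F$ corresponds to the continuous map $(s,x,t)\mapsto (s,F(x,t))$, so $\widetilde F$ is continuous there.

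\emph{Points $(\theta,t_0)$.} This is the delicate case. Let $U$ be a weak-open neighborhood of $\theta$. By definition there is $\varepsilon>0$ with $\eta([0,\varepsilon)\times X)\subset U$. The set $W=\eta([0,\varepsilon)\times X)$ is itself weak-open: it contains $\theta$, it contains the required strip, and its preimage $[0,\varepsilon)\times X$ is open. Moreover $\widetilde F(W\times I)\subset W\subset U$, because the first coordinate $s$ is preserved. Hence $W\times I$ is a neighborhood of $(\theta,t_0)$ mapped into $U$, which gives continuity at the vertex.

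One further point should be checked: the weak topology on $\mathrm{Cone}(A)$ as an abstract cone agrees with the subspace topology inherited from $\mathrm{Cone}(X)$. This is presumably how the statement is meant. It follows from the same two-case analysis, combined with the fact that $A\subset X$ carries the subspace topology. In any case, the definition of $G$-homotopy density only requires $\mathrm{Cone}(A)$ to be a $G$-subset of $\mathrm{Cone}(X)$, so this identification is not needed for the proof.
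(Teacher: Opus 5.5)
Your proposal is correct and follows the paper's proof: it uses the same levelwise map $K(tx,s)=tF(x,s)$, the same equivariance check, and the same two-case continuity argument, with the vertex handled via the weak-open set $\eta([0,\varepsilon)\times X)$, which $K$ maps into itself. The only cosmetic difference is at points with $s>0$: you use that $\eta$ restricts to a homeomorphism from $(0,1]\times X$ onto an open subset of $\mathrm{Cone}(X)$, whereas the paper does the equivalent neighborhood argument explicitly.
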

	
	\begin{proof}
		Since $A$ is $G$-homotopy dense in $X$, there exists a $G$-homotopy $F: X\times I\rightarrow X
		$ such that $F_0=\mbox{id}_X$ and $F(X\times (0,1])\subset A$. Define a  map $K:\mathrm{Cone}(X)\times I\rightarrow \mathrm{Cone}(X)$ by $K(tx,s)=tF(x,s)$. 
		
		Note that $K$ is well-defined,  $K(\mathrm{Cone}(X)\times (0,1])\subset \mathrm{Cone}(A)$ and that $K_0$ is the identity map of $\mathrm{Cone}(X)$. Moreover, for each $g\in G$, $x\in X$ and $s,t\in I$, we have
		
		\begin{center}
			$K\big(g(tx,s)\big)=K\big(t(gx),s\big)=t\big(gF(x,s)\big)= g\big(tF(x,s)\big)=gK(tx,s)$
		\end{center}
		showing that $K$ is equivariant. Thus, it remains to show that $K$ is continuous.
		 
		Fix $s\in I$ and let us prove that $K$ is continuous at $(\theta,s)$. Suppose that $U$ is a neighborhood of $K(\theta,s)=\theta$. Choose $\varepsilon>0$ such that  $[0,\varepsilon)\times X\subset \eta^{-1}(U)$, where $\eta:I \times X\rightarrow \mathrm{Cone}(X)$ is the quotient map. Since the set $W=\eta([0,\varepsilon)\times X)$ is an open neighborhood of $\theta$, it suffices to show that $K(W\times I)\subset U$.
		
		Let $(w,r)\in W\times I$. We may assume that $w\neq \theta$, so $w=tx$ for some $t\in (0,\varepsilon)$. Then $K(w,r)=K(tx,r)=tF(x,r)\in \eta([0,\varepsilon)\times X)\subset U$, and hence, $K$ is continuous at $(\theta,s)$.
		
		Now let $(az,b)\in \mathrm{Cone}(X)\times I$ with $a>0$, and suppose that $V$ is a neighborhood of $K(az,b)=aF(z,b)$. 
		
		There exist an open subset $U$ of $X$ and $\delta>0$, with $a-\delta>0$, such that $\big(a,F(z,b)\big)\in (a-\delta,a+\delta)\times U\subset \eta^{-1}(V)$. By the continuity of $F$, there exist neighborhoods $Z$ of $z$ in $X$ and $B$ of $b$ in $I$ such that $F(Z\times B)\subset U$.
		
		Define $D=\eta\big((a-\delta,a+\delta)\times Z\big)\times B$. Then $D$ is a neighborhood of $(az,b)$ and $K(D)\subset V$. Thus, $K$ is continuous in $(az,b)$. This finishes the proof.
		
	\end{proof} 
	
	\begin{theorem}\label{cone(A)GHDcone(X)}
		\rm{Let $G$ be a compact group and $(X,A)$ be a $G$-ANR-pair. Then $\big(\mathrm{Cone}(X),\mathrm{Cone}(A)\big)$ is a $G$-AR-pair.}
	\end{theorem}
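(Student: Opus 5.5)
By Theorem~\ref{G-pair and G-homotopy dense}, it suffices to show two things: $\mathrm{Cone}(X)$ is a $G$-AR, and $\mathrm{Cone}(A)$ is $G$-homotopy dense in $\mathrm{Cone}(X)$.

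The second claim is exactly Lemma~\ref{lemmacone(A)GHDcone(X)}. Since $(X,A)$ is a $G$-ANR-pair, Theorem~\ref{G-pair and G-homotopy dense} shows that $A$ is $G$-homotopy dense in $X$. The lemma then gives that $\mathrm{Cone}(A)$ is $G$-homotopy dense in $\mathrm{Cone}(X)$.

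For the first claim we must show that $\mathrm{Cone}(X)$ is a $G$-AR. I would split this into three steps.

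\begin{enumerate}
\item[(a)] $\mathrm{Cone}(X)$ is a metrizable $G$-space. Fix an invariant admissible metric $\sigma\le 1$ on $X$, which exists by compactness of $G$. Define
\[
\rho(tx,sy)=|t-s|+\min(t,s)\,\sigma(x,y).
\]
This is the standard metric inducing the weak topology on the cone. It is $G$-invariant because $\sigma$ is, and the action on the cone is continuous.
\item[(b)] $\mathrm{Cone}(X)$ is $G$-contractible. The map $C(tx,s)=(1-s)tx$ is a $G$-homotopy from the identity to the constant map at the $G$-fixed vertex $\theta$. Its continuity is checked in the same way as in the proof of Lemma~\ref{lemmacone(A)GHDcone(X)}.
\item[(c)] $\mathrm{Cone}(X)$ is a $G$-ANR. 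Combined with (b), \cite[Theorem 6]{A2} then upgrades $\mathrm{Cone}(X)$ to a $G$-AR, exactly as in the proof of Proposition~\ref{ANR-pair + contrac = AR-pair}.
\end{enumerate}

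I expect step (c) to be the main obstacle. I would prove it by covering the cone with two open invariant sets, $U_1=\eta((0,1]\times X)$ and $U_2=\eta([0,1)\times X)$.

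The set $U_1$ is $G$-homeomorphic to $(0,1]\times X$ with trivial action on the first factor. It is therefore a $G$-ANR, because a product of a $G$-ANR with an ANR carrying trivial action is a $G$-ANR.

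The set $U_2$ is an open cone. I would show it is a $G$-AE directly. Let $f:B\to U_2$ be a $G$-map on a closed $G$-subset $B$ of a metrizable $G$-space $Z$. Write $f=t(\cdot)\,x(\cdot)$ and put $W=f^{-1}(U_1\cap U_2)$. Extend $f|_W$ over a neighborhood using that $X$ is a $G$-ANE, with the coordinate $t$ extended equivariantly by Tietze and averaged over $G$. Then shrink the coordinate $t$ to $0$ near the boundary with a $G$-invariant Urysohn function, so that everything glues continuously to $\theta$.

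The intersection $U_1\cap U_2\cong(0,1)\times X$ is a $G$-ANR. The union theorem for $G$-ANRs (an open union of two $G$-ANRs is a $G$-ANR) then gives that $\mathrm{Cone}(X)$ is a $G$-ANR.

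Should the explicit extension into $U_2$ prove awkward, an alternative route to (c) is available. One could cite a known result that the cone over a $G$-ANR, with this metric topology, is a $G$-ANR. Alternatively, one could embed $X$ equivariantly as a closed subset of a normed linear $G$-space $L$ and realize $\mathrm{Cone}(X)$ inside $L\times\mathbb{R}$ via $tx\mapsto (tx,t)$, reducing the question to neighborhood retraction of cones in convex sets.

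Finally, with (a)--(c) established, $\mathrm{Cone}(X)$ is a $G$-AR and $\mathrm{Cone}(A)$ is $G$-homotopy dense in it. Theorem~\ref{G-pair and G-homotopy dense} then yields that $\big(\mathrm{Cone}(X),\mathrm{Cone}(A)\big)$ is a $G$-AR-pair.
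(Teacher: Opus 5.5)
Your proof is correct and has the same skeleton as the paper's. Both reduce, via Theorem~\ref{G-pair and G-homotopy dense}, to showing that $\mathrm{Cone}(X)$ is a $G$-AR and that $\mathrm{Cone}(A)$ is $G$-homotopy dense in $\mathrm{Cone}(X)$, and both get the latter from Lemma~\ref{lemmacone(A)GHDcone(X)}.

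The only divergence is the first ingredient. The paper simply cites \cite[Proposition 2.2]{A9}, which says that the cone over a $G$-ANR is a $G$-AR. It applies this to both $X$ and $A$, although only the statement for $X$ is needed, and your reduction correctly does without the other. You instead rebuild the fact from metrizability, $G$-contractibility, an open cover with a union theorem, and \cite[Theorem 6]{A2}. This is more self-contained, but heavier than necessary. The extension you sketch for the open cone $U_2$ works verbatim for the whole cone, with $t$ valued in $[0,1]$. It therefore shows directly that $\mathrm{Cone}(X)$ is a $G$-AE, hence a $G$-AR for compact $G$. The product theorem, the union theorem and the contractibility upgrade can then all be dropped.

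In that extension, be careful with the Urysohn step. The neighborhood $O$ of $W$ on which you extend the $X$-coordinate lies in $Z\setminus f^{-1}(\theta)$, and $\overline{W}$ typically meets $f^{-1}(\theta)$. So no continuous function on all of $Z$ equals $1$ on $W$ and $0$ off $O$. Instead, take the invariant function equal to $t$ on $B$ and to $0$ on $Z\setminus O$. This is consistent because $B\setminus O\subset f^{-1}(\theta)$, where $t=0$, and both pieces are closed. Extend it to an invariant $\tau$ on $Z$ by Tietze and averaging. Then set $F=\tau\,x'$ on $O$ and $F=\theta$ elsewhere. Continuity off $O$ follows from $\rho(F(z),\theta)=\tau(z)$.
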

	
	\begin{proof}
		Since $X$ and $A$ are $G$-ANRs, it follows from \cite[Proposition 2.2]{A9} that both $\mathrm{Cone}(X)$ and $\mathrm{Cone}(A)$ are $G$-ARs. Moreover, by Theorem \ref{G-pair and G-homotopy dense}, $A$ is $G$-homotopy dense in $X$, and by Lemma \ref{lemmacone(A)GHDcone(X)} we have that $\mathrm{Cone}(A)$ is $G$-homotopy dense in $\mathrm{Cone}(X)$. Hence, by Theorem \ref{G-pair and G-homotopy dense}, we conclude that $\big(\mathrm{Cone}(X), \mathrm{Cone}(A)\big)$ is a $G$-AR-pair.
	\end{proof}
	
	Recall from \cite[Ch. II, \S 2]{GB}, that given a subgroup $H$ of $G$ and an $H$-space $X$, the \textit{twisted product} $G\times_H X$ is the $H$-orbit space of the $H$-space $G\times X$, where $H$ acts on $G\times X$ by $h(g,x)=(gh^{-1},hx)$. For each $(g,x)\in G\times X$, the $H$-orbit $H(g,x)$ will be denoted by $[g,x]$, that is, $[g,x]=\{(gh^{-1},hx) \mid h\in H\}$. There exists a canonical action of $G$ on $G\times_HX$ defined by $g'[g,x]=[g'g,x]$. 
		
	\begin{lemma}\label{lemmaGxA G-HD GxD}
		\rm{Let $H$ be a closed subgroup of $G$, and let $X$ be an $H$-space. If $A\subset X$ is $H$-homotopy dense in $X$, then $G\times_H A$ is $G$-homotopy dense in $G\times_H X$.}
	\end{lemma}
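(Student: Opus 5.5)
Let $F\colon X\times I\to X$ be an $H$-homotopy with $F_0=\mathrm{id}_X$ and $F(X\times(0,1])\subset A$. I will define $K\colon (G\times_H X)\times I\to G\times_H X$ by
\[
K([g,x],t)=[g,F(x,t)].
\]
Well-definedness: if $[g,x]=[gh^{-1},hx]$, then $H$-equivariance of $F$ gives $[gh^{-1},F(hx,t)]=[gh^{-1},hF(x,t)]=[g,F(x,t)]$. Equivariance is immediate from the definitions: $K(g'[g,x],t)=[g'g,F(x,t)]=g'K([g,x],t)$. We have $K_0=\mathrm{id}$, and for $t>0$ the value $K([g,x],t)=[g,F(x,t)]$ lies in $G\times_H A$, where $G\times_H A$ is viewed as the $G$-subset $\{[g,a]\mid g\in G,\ a\in A\}$ of $G\times_H X$. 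Here I would note that $A$ is $H$-invariant (it is $H$-homotopy dense, hence an $H$-subset by definition), so this subset is well defined.

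The main point is continuity of $K$. Consider the map $\tilde K=\pi\circ(\mathrm{id}_G\times F)\colon G\times X\times I\to G\times_H X$, where $\pi\colon G\times X\to G\times_H X$ is the orbit projection. It is continuous, and $K\circ(\pi\times \mathrm{id}_I)=\tilde K$. So it suffices that $\pi\times\mathrm{id}_I$ be a quotient map. I would obtain this from the fact that $\pi$ is an orbit projection, hence open; the product of an open surjection with $\mathrm{id}_I$ is again an open surjection, and therefore a quotient map. This avoids any local compactness argument, although $I$ is compact in any case.

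I expect this openness observation to be the only nontrivial step. Everything else is a direct verification. A second, smaller issue is that the topology on $G\times_H A$ should be the one it carries as a subspace of $G\times_H X$. This does not matter here, because homotopy density only requires the image of $K$ on $(G\times_H X)\times(0,1]$ to lie inside the subset.
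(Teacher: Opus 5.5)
Your proof is correct and is essentially the paper's argument. The paper obtains the same homotopy $([g,x],t)\mapsto[g,F(x,t)]$ by composing the canonical $G$-homeomorphism $(G\times_H X)\times I\cong G\times_H(X\times I)$ with the map induced by the $H$-map $F$. Your observation that $\pi\times\mathrm{id}_I$ is an open surjection, hence a quotient map, is exactly the continuity fact the paper takes for granted when it calls that identification a homeomorphism.
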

	
	\begin{proof}
		Throughout the proof we use the canonical $G$-homeomorphism $\alpha: (G\times_HX)\times I\rightarrow G\times_H(X\times I)$ defined by $\alpha([g,x],t)=[g,(x,t)]$.
		
		By our hypothesis, there exists an $H$-homotopy $F:X\times I\rightarrow X$ such that $F_0=\mbox{id}_X$ and $F(X\times (0,1])\subset A$. Since $F$ is an $H$-map, the map $K: G\times_H (X\times I)\rightarrow G\times_H X$ defined by 
		
		\begin{center}
			$K([g,(x,t)])=[g,F(x,t)],\quad [g,(x,t)]\in G\times_H(X\times I)$,
		\end{center}
		is a $G$-map. Consequently, the composition $J=K\circ \alpha: (G\times_HX)\times I\rightarrow G\times_HX$ is a $G$-map.
		
		We now show that $J$ is the desired $G$-homotopy. 
		Indeed, for each $[g,x] \in G \times_H X$ and $t \in (0,1]$, we have
		
		\begin{center}
			$J([g,x],0)=K([g,(x,0)])=[g,F(x,0)]=[g,x]$,
		\end{center}
		and
		\begin{center}
			$J([g,x],t)=K([g,(x,t)])=[g,F(x,t)]\in G\times_HA$.
		\end{center}
		Thus, $J_0=\mbox{id}_{G\times_HX}$ and $J\big((G\times_HX)\times (0,1]\big)\subset G\times_HA$. Therefore, $G\times_HA$ is $G$-homotopy dense in $G\times_HX$. 
	\end{proof}

	\begin{theorem}\label{GxA G-HD GxD}
		\rm{Let $H$ be a closed subgroup of a compact Lie group $G$. If $(X,A)$ is an $H$-ANR-pair, then $(G\times_HX,G\times_H A)$ is a $G$-ANR-pair.}
	\end{theorem}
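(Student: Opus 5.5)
The plan is to follow the pattern of Theorem \ref{cone(A)GHDcone(X)}: identify the ambient space and the subset as $G$-ANRs, and then obtain $G$-homotopy density from Lemma \ref{lemmaGxA G-HD GxD}.

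Since $(X,A)$ is an $H$-ANR-pair, the observation after Definition \ref{G-ANR-pair} shows that $X$ and $A$ are $H$-ANRs. By Theorem \ref{G-pair and G-homotopy dense}, applied to the compact group $H$ (a closed subgroup of a compact group), $A$ is $H$-homotopy dense in $X$. Lemma \ref{lemmaGxA G-HD GxD} then gives that $G\times_H A$ is $G$-homotopy dense in $G\times_H X$.

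It remains to check that $G\times_H X$ is a metrizable $G$-space and a $G$-ANR. Once this is known, the converse direction of Theorem \ref{G-pair and G-homotopy dense}, now applied to the compact group $G$, yields that $(G\times_H X, G\times_H A)$ is a $G$-ANR-pair. Metrizability holds because $G\times X$ is metrizable (a compact Lie group is metrizable) and the quotient by the compact group $H$ is metrizable via the orbit metric $\widetilde d$. For the $G$-ANR property I would invoke the known result, due to Antonyan's work on equivariant retracts, that for a closed subgroup $H$ of a compact Lie group $G$ the functor $G\times_H(-)$ carries $H$-ANRs to $G$-ANRs. This is the only place the Lie hypothesis enters.

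If a self-contained argument were needed, I would proceed in three steps.

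1. Embed $X$ as a closed invariant subset of an $H$-ANR $L$, for instance a convex invariant subset of a normed linear $H$-space, with $X$ an $H$-neighborhood retract of $L$.
2. Use the induced $G$-space $G\times_H L$, which is a $G$-ANR because $G/H$ is a compact manifold when $G$ is Lie. This step uses slice theory: $G\times_H L$ is locally a product of an open set in $G/H$ with $L$, and local $G$-ANRs over a finite-dimensional base are $G$-ANRs.
3. Take the induced $G$-retraction $[g,u]\mapsto[g,r(u)]$ of $G\times_H U$ onto $G\times_H X$.

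I expect this $G$-ANR step to be the main obstacle, which is why I would cite the literature result rather than reprove it. The density part is immediate from Lemma \ref{lemmaGxA G-HD GxD}.
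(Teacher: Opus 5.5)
Your proposal is correct and follows the paper's proof: the paper also cites Antonyan's result (Proposition 3.1 of the reference A4) that $G\times_H X$ (and $G\times_H A$) are $G$-ANRs, gets $G$-homotopy density from Lemma \ref{lemmaGxA G-HD GxD} (implicitly using Theorem \ref{G-pair and G-homotopy dense} for $H$, which you make explicit), and concludes with Theorem \ref{G-pair and G-homotopy dense}. Your self-contained three-step sketch is not needed, and its second step is only heuristic, but since you rely on the cited result this does not affect the argument.
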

	
	\begin{proof}
		Since $X$ and $A$ are $H$-ANRs, it follows from \cite[Proposition 3.1]{A4} that $G \times_H X$ and $G \times_H A$ are $G$-ANRs. Moreover, by Lemma \ref{lemmaGxA G-HD GxD} we have that $G \times_H A$ is $G$-homotopy dense in $G \times_H X$. Therefore, by Theorem \ref{G-pair and G-homotopy dense}, we deduce that $(G \times_H X, G \times_H A)$ is a $G$-ANR-pair.
	\end{proof}
	
	\begin{lemma}\label{lemmacondicionesGhomotopydense}
		\rm{Let $X$ be a $G$-space and $Z$ a $G$-homotopy dense subset of $X$. Then, for each closed subgroup $H$ of $G$, the following conditions are satisfied:
			
			\begin{enumerate}
				\item [(i)] $Z^H$ is homotopy dense in $X^H$.
				\item [(ii)] The $H$-orbit space $Z/H$ is homotopy dense in $X/H$.
				\item [(iii)] $Z$ is $H$-homotopy dense in $X$.
		\end{enumerate}}
	\end{lemma}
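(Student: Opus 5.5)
Throughout, fix a $G$-homotopy $F\colon X\times I\to X$ with $F_0=\mathrm{id}_X$ and $F(X\times(0,1])\subset Z$, as in the definition of $G$-homotopy dense subsets. The plan is to check, for each of the three items, that a suitable restriction or descent of this single $F$ is the required homotopy. The only property we will use is the equivariance $F(gx,t)=gF(x,t)$ for all $g\in G$, $x\in X$, $t\in I$.

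For (iii) nothing needs to be done. A $G$-map is in particular an $H$-map, since $H\subset G$. Hence $F$ is an $H$-homotopy with $F_0=\mathrm{id}_X$ and $F(X\times(0,1])\subset Z$, so $Z$ is $H$-homotopy dense in $X$.

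For (i), we first check that $F$ maps $X^H\times I$ into $X^H$. If $x\in X^H$ and $h\in H$, then $hF(x,t)=F(hx,t)=F(x,t)$, so $F(x,t)\in X^H$. Let $F^H\colon X^H\times I\to X^H$ be the restriction of $F$. It is continuous and $F^H_0=\mathrm{id}_{X^H}$. For $t>0$ we have $F^H(x,t)\in Z\cap X^H$. Since $Z$ is a $G$-subset, $Z\cap X^H=Z^H$. Hence $Z^H$ is homotopy dense in $X^H$.

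For (ii), define $\widetilde F\colon (X/H)\times I\to X/H$ by $\widetilde F(H(x),t)=H(F(x,t))$. This is well defined because $F(hx,t)=hF(x,t)$. We have $\widetilde F_0=\mathrm{id}$. For $t>0$ the value $H(F(x,t))$ lies in $Z/H$; here $Z/H$ is identified with $\pi(Z)\subset X/H$, which is legitimate since $Z$ is $H$-invariant.

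The main obstacle is the continuity of $\widetilde F$. We have $\widetilde F\circ(\pi\times\mathrm{id}_I)=\pi\circ F$, which is continuous, so it suffices to know that $\pi\times\mathrm{id}_I\colon X\times I\to (X/H)\times I$ is a quotient map. I would argue this in two ways:
\begin{enumerate}
\item[(a)] The orbit projection $\pi\colon X\to X/H$ of any group action is open, because $\pi^{-1}(\pi(U))=\bigcup_{h\in H}hU$ is open whenever $U$ is open. Hence $\pi\times\mathrm{id}_I$ is an open continuous surjection, and therefore a quotient map.
\item[(b)] Alternatively, since $I$ is locally compact Hausdorff, the product of a quotient map with $\mathrm{id}_I$ is a quotient map (Whitehead's lemma).
\end{enumerate}
Either argument gives continuity of $\widetilde F$, which completes (ii).

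As a final check, the quotient $Z/H$ with its own quotient topology should agree with the subspace topology it inherits from $X/H$. This holds because the restriction of an open map to a saturated subset is again open onto its image.
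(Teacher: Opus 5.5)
Your proof is correct and follows the paper's argument: you restrict $F$ to $X^H\times I$ for (i), descend $F$ to the orbit space for (ii), and note for (iii) that a $G$-map is an $H$-map. For (ii), the paper asserts without proof that $(X/H)\times I\cong (X\times I)/H$ and composes with the induced map on $(X\times I)/H$; your proof that $\pi\times\mathrm{id}_I$ is a quotient map is exactly the justification behind that homeomorphism.
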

	
	\begin{proof}
		There exists a $G$-homotopy $F: X\times I\rightarrow X$ such that $F_0=\mbox{id}_X$ and $F(X\times (0,1])\subset Z$. 
		
		Let $H$ be a closed subgroup of $G$. Then $F$ induces a continuous map $F^H: (X\times I)^H\rightarrow X^H$ defined by $F^H(x,t)=F(x,t)$. Notice that $(X\times I)^H=X^H\times I$, so $F^H$ is a map from $X^H\times I$ to $X^H$. Moreover, $F^H_0=\mbox{id}_{X^H}$ and $F^H(X^H\times (0,1])\subset X^H\cap Z=Z^H$; hence, $Z^H$ is homotopy dense in $X^H$. This completes the proof of (i).
		
		To prove (ii), define a continuous map  $K: (X\times I)/H\rightarrow X/H$ by $K([(x,t)])=[F(x,t)]$ for each $[(x,t)]\in (X\times I)/H$. Since the map $\alpha: (X/H)\times I\rightarrow (X\times I)/H$ given by $\alpha([x],t)=[(x,t)]$ is a homeomorphism, the composition $J=K\circ \alpha: (X/H)\times I\rightarrow X/H$ is continuous. 
		
		For each $[x]\in X/H$ and $t\in (0,1]$, we have
		
		\begin{center}
			$J([x],0)=K([(x,0)])=[F(x,0)]=[x]$
		\end{center}
		and
		\begin{center}
			$J([x],t)=K([(x,t)])=[F(x,t)]\in Z/H$.
		\end{center}
		Thus, $J_0=\mbox{id}_{X/H}$ and $J\big((X/H)\times (0,1]\big)\subset Z/H$. Hence,  $Z/H$ is homotopy dense in $X/H$, which proves (ii).
		
		Finally, since $F$ is an $H$-map, condition (iii) follows directly.
	\end{proof}
	
	Let $G$ be a compact group and suppose that $(X,A)$ is a $G$-ANR-pair. By \cite[Theorem 1.1]{A5}, the orbit spaces $X/G$ and $A/G$ are ANRs. In addition, $A/G$ is homotopy dense in $X/G$. Consequently, by Theorem \ref{G-pair and G-homotopy dense}, we deduce that $(X/G,A/G)$ is an ANR-pair.
	
	Now suppose that $H$ is a closed subgroup of $G$. By Lemma \ref{lemmacondicionesGhomotopydense}, we have that $A^H$ is homotopy dense in $X^H$. Since both $X^H$ and $A^H$ are ANRs, by Theorem \ref{G-pair and G-homotopy dense} we obtain that $(X^H, A^H)$ is an ANR-pair.
	
	Finally, it follows from \cite[Corollary 6.4]{AAM} that $X$ and $A$ are $H$-ANRs. Applying once again Theorem \ref{G-pair and G-homotopy dense}, we conclude that $(X, A)$ is an $H$-ANR-pair.
	
	We summarize the preceding results in the following theorem.
	
	\begin{theorem}\label{condicionesGhomotopydense}
		\rm{Let $H$ be a closed subgroup of a compact group $G$. If $(X,A)$ is a $G$-ANR-pair, then $(X^H,A^H)$ and $(X/G,A/G)$ are ANR-pairs. Furthermore, $(X,A)$ is an $H$-ANR-pair.}
	\end{theorem}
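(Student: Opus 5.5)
The plan is to reduce each of the three assertions to Theorem \ref{G-pair and G-homotopy dense}, applied with the trivial group or with $H$ in place of $G$. That theorem says a pair is an ANR-pair (or an $H$-ANR-pair) exactly when the ambient space is an ANR (or an $H$-ANR) and the subset is homotopy dense (or $H$-homotopy dense). So for each of $(X^H,A^H)$, $(X/G,A/G)$ and $(X,A)$ viewed as an $H$-pair, two things must be checked: the ambient space and the subset are ANRs of the right kind, and the density condition holds.

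The starting point is that $(X,A)$ is a $G$-ANR-pair. By the remark after Definition \ref{G-ANR-pair}, both $X$ and $A$ are then $G$-ANRs. By Theorem \ref{G-pair and G-homotopy dense}, $A$ is $G$-homotopy dense in $X$.

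The density conditions come from Lemma \ref{lemmacondicionesGhomotopydense}, taking $Z=A$.
\begin{itemize}
\item Part (i) gives that $A^H$ is homotopy dense in $X^H$.
\item Part (ii) with $H=G$ gives that $A/G$ is homotopy dense in $X/G$.
\item Part (iii) gives that $A$ is $H$-homotopy dense in $X$.
\end{itemize}

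The ANR properties are imported from the literature.
\begin{itemize}
\item For orbit spaces, \cite[Theorem 1.1]{A5} says that the orbit space of a $G$-ANR, for compact $G$, is an ANR. So $X/G$ and $A/G$ are ANRs, and in particular metrizable, as Theorem \ref{G-pair and G-homotopy dense} requires.
\item For fixed-point sets, $X^H$ and $A^H$ are ANRs. This is the standard fact that the $H$-fixed set of a $G$-ANR is an ANR: it is a closed subset that is a neighborhood retract via equivariant extension. Alternatively, it follows by first restricting to $H$ and then taking $H$-fixed points.
\item For restriction to a subgroup, \cite[Corollary 6.4]{AAM} gives that a $G$-ANR is an $H$-ANR for a closed subgroup $H$ of the compact group $G$. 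So $X$ and $A$ are $H$-ANRs. Since $H$ is compact, Theorem \ref{G-pair and G-homotopy dense} applies to $H$ as well.
\end{itemize}

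Combining these, Theorem \ref{G-pair and G-homotopy dense} gives each of the three conclusions: for $(X^H,A^H)$ and $(X/G,A/G)$ with the trivial group, and for $(X,A)$ with the group $H$.

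I expect no step to be a real obstacle, since all the substantive work sits in the cited results. The only delicate part is the ANR inheritance results: that orbit spaces of $G$-ANRs are ANRs, and that restricting the action to a closed subgroup preserves the $G$-ANR property. These need compactness of $G$ and are taken from \cite{A5} and \cite{AAM} rather than reproved.
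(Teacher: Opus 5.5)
Your proposal is correct and follows the paper's argument essentially step for step. You get density from Lemma~\ref{lemmacondicionesGhomotopydense} (using $H=G$ for the orbit space), the ANR properties from \cite[Theorem 1.1]{A5}, the fixed-point fact and \cite[Corollary 6.4]{AAM}, and then apply Theorem~\ref{G-pair and G-homotopy dense} with the trivial group and with $H$. The paper simply asserts that $X^H$ and $A^H$ are ANRs, so your sketch is slightly more complete: extend $H$-equivariantly into the $H$-ANR $X$ from a domain carrying the trivial $H$-action, which forces the values into $X^H$.
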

	
	Recall that the $n$-th symmetric power of a topological space $X$ is the $S_n$-orbit space $SP^n(X)=X^n/S_n$, where the symmetric group $S_n$ acts on $X^n$ by permuting coordinates, with $n\geq 2$.
	
	If $X$ is equipped with an action $\theta:G\times X\rightarrow X$ of a topological group $G$, it follows from \cite{HJ1} that $SP^n(X)$ is a $G$-space with the action $\overline{\theta}: G\times SP^n(X)\rightarrow SP^n(X)$ defined by
	\begin{center}
		$\overline{\theta}(g,[x_1,\dots,x_n])=[gx_1,\dots,gx_n],\quad g\in G,\quad [x_1,\dots,x_n]\in SP^n(X)$.
	\end{center}
	
	Moreover, it was shown in \cite[Theorem 4.2]{HJ1} that for any compact group $G$, the $n$-th symmetric power $SP^n(X)$ is a $G$-AR (respectively, a $G$-ANR) whenever $X$ is a $G$-AR (respectively, a $G$-ANR). Hence, we obtain the following result.
	
	\begin{theorem}\label{npower}
		\rm{Let $G$ be a compact group. If $(X,A)$ is a $G$-AR-pair (respectively, a $G$-ANR-pair), then $\big(SP^n(X),SP^n(A)\big)$ is a $G$-AR-pair (respectively, a $G$-ANR-pair).}
	\end{theorem}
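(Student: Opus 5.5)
The plan is to use Theorem \ref{G-pair and G-homotopy dense} in both directions. Suppose $(X,A)$ is a $G$-AR-pair (respectively, a $G$-ANR-pair). Then $X$ is a $G$-AR (respectively, a $G$-ANR) and $A$ is $G$-homotopy dense in $X$. By \cite[Theorem 4.2]{HJ1}, $SP^n(X)$ is a $G$-AR (respectively, a $G$-ANR). Applying Theorem \ref{G-pair and G-homotopy dense} once more, it remains only to show that $SP^n(A)$ is $G$-homotopy dense in $SP^n(X)$. Here $SP^n(A)$ is viewed as the $G$-invariant subset $\{[a_1,\dots,a_n]\mid a_i\in A\}$ of $SP^n(X)$.

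First I will handle the product. Let $F\colon X\times I\to X$ be a $G$-homotopy with $F_0=\mathrm{id}_X$ and $F(X\times(0,1])\subset A$. By Proposition \ref{producthomotopydense}, taking $n$ copies, the map
\[
F^{(n)}\colon X^n\times I\to X^n,\qquad F^{(n)}((x_1,\dots,x_n),t)=(F(x_1,t),\dots,F(x_n,t))
\]
shows that $A^n$ is homotopy dense in $X^n$. The key observation is that $F^{(n)}$ is equivariant with respect to two actions on $X^n$: the diagonal $G$-action, and the permutation action of $S_n$. The second holds because the same $F$ is applied in every coordinate. So $F^{(n)}$ is a $(G\times S_n)$-homotopy, and these two actions commute.

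Next I pass to the orbit space. Since $S_n$ is finite, Lemma \ref{lemmacondicionesGhomotopydense}(ii) applies to the $S_n$-space $X^n$ with $F^{(n)}$. It gives a homotopy $J\colon SP^n(X)\times I\to SP^n(X)$ defined by
\[
J([x_1,\dots,x_n],t)=[F(x_1,t),\dots,F(x_n,t)],
\]
with $J_0=\mathrm{id}$ and $J(SP^n(X)\times(0,1])\subset A^n/S_n=SP^n(A)$. Continuity comes from the proof of that lemma: the map $\alpha\colon (X^n/S_n)\times I\to (X^n\times I)/S_n$ is a homeomorphism. This is automatic for a finite group, since the orbit map is open and $I$ is compact, so product with $I$ preserves the quotient. $G$-equivariance of $J$ is then immediate from the formula for $\overline{\theta}$ and the $G$-equivariance of $F$. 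Hence $SP^n(A)$ is $G$-homotopy dense in $SP^n(X)$, and Theorem \ref{G-pair and G-homotopy dense} finishes the proof.

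The only genuinely delicate point is the continuity of $J$, that is, the identification of $(X^n\times I)/S_n$ with $SP^n(X)\times I$. I would justify it by the openness of the orbit projection for a finite group action, as in the proof of Lemma \ref{lemmacondicionesGhomotopydense}. I would also note that $A^n/S_n$ embeds in $X^n/S_n$ as the subspace $SP^n(A)$. This holds because $A^n$ is an $S_n$-invariant subset and the orbit map is open, so restricting the quotient topology to $A^n/S_n$ agrees with the intrinsic quotient topology. That is what lets the statement "$SP^n(A)$ is $G$-homotopy dense" make sense as a statement about a $G$-subset of $SP^n(X)$.
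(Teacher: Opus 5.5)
Your proof is correct and is essentially the paper's argument. Both quote the symmetric-power result for $SP^n(X)$, use the homotopy $[x_1,\dots,x_n]\mapsto[F(x_1,t),\dots,F(x_n,t)]$ to show $SP^n(A)$ is $G$-homotopy dense, and finish with Theorem~\ref{G-pair and G-homotopy dense}; your continuity check via Lemma~\ref{lemmacondicionesGhomotopydense}(ii) and your subspace identification of $SP^n(A)$ (saturated set, open orbit map) fill in details the paper leaves implicit.
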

	
	\begin{proof}
		If $(X,A)$ is a $G$-ANR-pair, then both $SP^n(X)$ and $SP^n(A)$ are $G$-ANRs. Moreover, there exists a $G$-homotopy $F:X\times I\rightarrow X$ such that $F_0=\mathrm{id}_X$ and $F(X\times(0,1])\subset A$.
		
		Consider the map $K: SP^n(X)\times I\rightarrow SP^n(X)$ defined by
		\begin{center}
			$K([x_1,\dots,x_n],t)=[F(x_1,t),\dots,F(x_n,t)]$
		\end{center}
		for all $[x_1,\dots,x_n]\in SP^n(X)$ and $ t\in I$.
		
		Then $K$ is a $G$-map, $K_0=\mathrm{id}_{SP^n(X)}$ and $K(SP^n(X)\times (0,1])\subset SP^n(A)$. Hence, $SP^n(A)$ is $G$-homotopy dense in $SP^n(X)$, and by Theorem \ref{G-pair and G-homotopy dense}, we conclude that $\big(SP^n(X),SP^n(A)\big)$ is a $G$-ANR-pair.
		
		The case in which $(X,A)$ is a $G$-AR-pair is similar.
	\end{proof}
	
	In Theorem \ref{functionC(X,Y)GUANR} we provide an affirmative answer to Question \ref{preguntaC(X,Y)GUANR}.
	
	\begin{theorem}\label{functionC(X,Y)GUANR}
		\rm{Let $G$ be a compact group, $X$ a compact $G$-space and $(Y,d)$ a metric space. Then $Y$ is a UAR (respectively, a UANR) if and only if $(C(X,Y),d^*)$ is a $G$-UAR (respectively, a $G$-UANR).}
	\end{theorem}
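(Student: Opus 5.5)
The plan is to prove the two implications separately. The converse ("$Y$ is a UA(N)R $\Rightarrow$ $C(X,Y)$ is a $G$-UA(N)R") will be proved directly through the extension characterization. The idea is to pass to adjoint maps on the product $M\times X$, quotient by the diagonal $G$-action, and use that $Y$ is a UA(N)E, via Michael's equivalence of UA(N)R and UA(N)E, i.e. the case $G=\{e\}$ of the fact recalled in Section \ref{sec2}. The homotopy-density results of the section could serve as an alternative route, but the direct argument seems cleaner.

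\emph{Necessity.} Assume $(C(X,Y),d^*)$ is a $G$-UA(N)R. By Theorem \ref{lemautil}(i) it is a UA(N)R. Let $c:Y\to C(X,Y)$ send $y$ to the constant map with value $y$. Then $c$ is an isometric embedding onto the set $C_0$ of constant maps. Fix $x_0\in X$; the map $f\mapsto c(f(x_0))$ is a $1$-Lipschitz retraction of $C(X,Y)$ onto $C_0$. A uniform retract of a UA(N)R is a UA(N)R: compose the uniform (neighborhood) retraction onto $C(X,Y)$ with this Lipschitz one, noting that $C_0$ is closed, so any closed isometric embedding of $C_0$ extends the picture. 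Hence $Y\cong C_0$ is a UA(N)R.

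\emph{Sufficiency.} Let $Y$ be a UANR, hence a UANE; the AR case is identical with $U=M$.
\begin{enumerate}
\item Fix an invariant metric $\rho$ on $X$ and a metric $G$-space $(M,\mu)$. Take a closed $G$-subset $A\subset M$ and a uniformly continuous $G$-map $f:A\to C(X,Y)$.
\item Put $\phi(a,x)=f(a)(x)$ on $A\times X$. Equivariance gives $\phi(ga,gx)=(gf(a))(gx)=f(a)(x)$, so $\phi$ is constant on diagonal orbits.
\item Give $M\times X$ the invariant metric $\mu+\rho$. By compactness of $G$, the orbit space $(M\times X)/G$ carries the metric $\widetilde{\mu+\rho}$ recalled in Section \ref{sec2}, and $(A\times X)/G$ is closed in it.
\item Check that $\phi$ induces a uniformly continuous $\bar\phi$ on $(A\times X)/G$. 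This uses uniform continuity of $f$ in $d^*$, joint uniform continuity coming from compactness of $X$, and choosing orbit representatives.
\item Apply the UANE property of $Y$ to get an extension $\bar\Phi$ on a uniform neighborhood $W=B((A\times X)/G,\delta)$, uniformly continuous at $(A\times X)/G$.
\item Set $U=B_\mu(A,\delta)$, an invariant uniform neighborhood, and define $F(m)(x)=\bar\Phi(G(m,x))$ for $m\in U$. This is well defined because $\mu(m,a)<\delta$ forces $(m,x)$ to be within $\delta$ of $(a,x)$.
\item Verify the required properties of $F$:
\begin{itemize}
\item $F(m)$ is continuous on $X$.
\item $F:U\to C(X,Y)$ is continuous for $d^*$, by the tube lemma with $X$ compact.
\item $F$ is equivariant, since $F(gm)(x)=\bar\Phi(G(gm,x))=\bar\Phi(G(m,g^{-1}x))=(gF(m))(x)$.
\item $F$ extends $f$.
\item $F$ is uniformly continuous at $A$: if $\mu(m,a)<\eta$, then the orbit distance between $G(m,x)$ and $G(a,x)$ is $<\eta$ for every $x$, so $\sup_x d(F(m)(x),f(a)(x))$ is small.
\end{itemize}
\end{enumerate}
Thus $C(X,Y)$ is a $G$-UANE, hence a $G$-UANR. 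The action on $C(X,Y)$ is continuous by \cite[Proposition 4]{A3}, since $G$ and $X$ are compact.

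The main obstacle is step 4: showing that uniform continuity of $f$ in $d^*$ yields uniform continuity of the induced map on the orbit space. Orbit distance may be realized by pairs $(ga,gx)$, $(m',x')$, so I must combine uniform continuity of $f$ with uniform equicontinuity of $\{f(a)\}$. I expect to obtain that equicontinuity from compactness of $X$ together with uniform continuity of $f$ at nearby points, possibly after first restricting to a small uniform scale. If this fails, the fallback is to first extend $\phi$ on $A\times X$ non-equivariantly using the UANE property of $Y$, which also requires only uniform continuity of $\phi$. I would then make the extension equivariant by composing with the map $(m,x)\mapsto$ orbit, after checking that this composite remains uniformly continuous at $A$.
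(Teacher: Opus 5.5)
\textbf{Necessity.} Your argument here is fine and is essentially the paper's: the constant maps form a $1$-Lipschitz retract of $C(X,Y)$ isometric to $Y$, and a uniform retract of a UA(N)R is a UA(N)R.

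\textbf{Sufficiency: the gap at step 4.} The sufficiency argument has a genuine gap at step 4, and it cannot be repaired along the lines you suggest. The adjoint $\phi(a,x)=f(a)(x)$ need not be uniformly continuous on $A\times X$ for $\mu+\rho$. Uniform continuity of $f$ in $d^*$ does not make the family $\{f(a)\}_{a\in A}$ uniformly equicontinuous when $A$ is not compact; Arzel\`a--Ascoli would require $f(A)$ to be totally bounded. Compactness of $X$ only gives each $f(a)$ its own modulus of continuity, and that modulus can degenerate as $a$ varies.

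Concretely, take $G$ trivial, $X=[0,1]$, $Y=\mathbb{R}$, $A=[0,\infty)\subset M=\mathbb{R}$ and $f(t)(x)=\sin(tx)$. Then $d^*(f(t),f(s))\le |t-s|$, so $f$ is $1$-Lipschitz. But for $t\ge\pi/2$ the points $(t,0)$ and $(t,\pi/(2t))$ are at distance $\pi/(2t)\to 0$, while $|\phi(t,0)-\phi(t,\pi/(2t))|=1$.

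None of the proposed escapes work:
\begin{itemize}
\item All compatible metrics on the compact space $X$ are uniformly equivalent, so no choice of $\rho$ helps.
\item The orbit quotient plays no role, since $G$ is trivial here.
\item Restricting to a small uniform scale does not help, because the failure occurs at every scale.
\item Your fallback also feeds $\phi$ to the UANE property of $Y$, so it fails for the same reason.
\item Changing the metric on $M\times X$ to force uniform continuity of $\phi$ (for instance, adding $\sup_a d(f(a)(x),f(a)(x'))$) would in general no longer induce the topology of $X$. That destroys continuity of $F(m)$.
\end{itemize}

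\textbf{How the paper avoids this.} The paper never passes to the adjoint. Its key device is postcomposition. If $r\colon U\to B$ is a retraction uniformly continuous at $B$, then $p\mapsto r\circ p$ is uniformly continuous at $C(X,B)$ for $d^*$ with the same modulus, because pointwise estimates pass to the supremum and no equicontinuity is needed (Lemma~\ref{ejemplouniformretract}). The equivariant extension problem is solved not in $Y$ but in the Banach $G$-space $C(X,L)$. The steps are:
\begin{itemize}
\item The completion $\widetilde Y$ is a UANR containing $Y$ as a homotopy dense subset.
\item $\widetilde Y$ sits isometrically as a closed subset of a Banach space $L$.
\item $C(X,L)$ is a $G$-UAR (Lemma~\ref{UAE grupo compacto no Lie}).
\item Hence $C(X,\widetilde Y)$ is a $G$-UANR (Lemma~\ref{retractuniform}).
\item The result descends to $C(X,Y)$ by Lemmas~\ref{functionhomotopydense} and~\ref{G-UANRpreserveHD}.
\item The UAR case follows because $C(X,Y)$ is also a $G$-AR, via Lemma~\ref{G-AR+G-UANR=G-UAR}.
\end{itemize}
The completion and homotopy-density detour is what makes a closed embedding into a Banach space available. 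The postcomposition device is what sidesteps the joint uniform continuity that your step 4 requires.
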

	
	The proof of Theorem \ref{functionC(X,Y)GUANR} is based on the following four lemmas established in \cite{A10}, together with Lemmas \ref{functionhomotopydense} and \ref{ejemplouniformretract}.
	
	\begin{lemma}[\cite{A10}]\label{UAE grupo compacto no Lie}
		\rm{Let $V$ be a closed convex $G$-subset of a Banach $G$-space $L$. If $G$ is compact, then $V$ is a $G$-UAR.} 
	\end{lemma}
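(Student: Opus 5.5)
The plan is to prove that $V$ is a $G$-UAE. By the equivalence between $G$-UA(N)Rs and $G$-UA(N)Es for compact $G$ recalled in Section~\ref{sec2}, this yields that $V$ is a $G$-UAR. So fix a metric $G$-space $(Z,\rho)$ with $\rho$ invariant, a closed $G$-subset $B\subset Z$, and a uniformly continuous $G$-map $f\colon B\to V$. We must produce a $G$-extension $\widetilde F\colon Z\to V$ of $f$ that is uniformly continuous at $B$. The strategy is to take a non-equivariant extension first and then average it over $G$ with respect to normalized Haar measure $\mu$.

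\emph{Step 1 (non-equivariant extension).} Forgetting the action, $V$ is a closed convex subset of a Banach space. By Michael's theorem \cite{EM1}, such a set is a UAE; this can also be seen directly from the Dugundji extension formula, using a locally finite partition of unity on $Z\setminus B$ subordinate to balls of radius comparable to $\rho(z,B)$. Hence there is an extension $F\colon Z\to V$ of $f$ that is uniformly continuous at $B$.

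\emph{Step 2 (averaging).} Define
\[
\widetilde F(z)=\int_G gF(g^{-1}z)\,d\mu(g),\qquad z\in Z .
\]
For fixed $z$, the map $g\mapsto gF(g^{-1}z)$ is continuous from the compact group $G$ into $V$, since $V$ is a $G$-subset. Its image is therefore a compact subset of $V$, and in a Banach space the closed convex hull of a compact set is compact. So by \cite[Theorem 3.27]{WR} the integral exists and lies in $\overline{\operatorname{conv}}$ of that image, which is contained in $V$ because $V$ is closed and convex. Equivariance follows from left invariance of $\mu$ together with the fact that each $h\in G$ acts by a continuous linear operator commuting with the integral. If $z\in B$, then $gF(g^{-1}z)=gf(g^{-1}z)=f(z)$ for every $g$, so $\widetilde F$ extends $f$.

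\emph{Step 3 (uniform bounds and continuity).} Since $G$ is compact and acts continuously, for each $v$ the orbit $\{gv\mid g\in G\}$ is bounded. By the uniform boundedness principle, $M=\sup_{g\in G}\|g\|<\infty$.

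\emph{Uniform continuity at $B$.} Given $\varepsilon>0$, choose $\delta$ from the uniform continuity of $F$ at $B$. If $\rho(z,b)<\delta$ with $b\in B$, then invariance of $\rho$ gives $\rho(g^{-1}z,g^{-1}b)<\delta$, where $g^{-1}b\in B$. Hence $\|gF(g^{-1}z)-gF(g^{-1}b)\|\le M\varepsilon$ for every $g$, and integrating gives $\|\widetilde F(z)-\widetilde F(b)\|\le M\varepsilon$.

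\emph{Continuity of $\widetilde F$ on all of $Z$.} The map $(g,z)\mapsto gF(g^{-1}z)$ is jointly continuous on $G\times Z$. By compactness of $G$ (a tube-lemma argument), for each $z_0$ and $\varepsilon>0$ there is a neighbourhood $W$ of $z_0$ such that $\|gF(g^{-1}z)-gF(g^{-1}z_0)\|<\varepsilon$ for all $g\in G$ and $z\in W$. Integrating then gives $\|\widetilde F(z)-\widetilde F(z_0)\|\le\varepsilon$ on $W$.

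I expect the main obstacle to be Step 3: the vector-valued integral must be well defined and stay in $V$, and the estimates must be uniform in $g$. The key inputs are the compactness of closed convex hulls of compact sets in Banach spaces and the uniform operator bound $M$ from the Banach--Steinhaus theorem. Alternatively, one could first pass to the equivalent invariant norm $\sup_{g}\|gv\|$, which makes every $g$ an isometry and removes the factor $M$.
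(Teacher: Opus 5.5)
Your proof is correct, and there is nothing in the paper to compare it against: the lemma is quoted from \cite{A10} and not reproved here. Your route is the standard one, and it is the same averaging device the paper uses elsewhere. You take a non-equivariant Dugundji (Michael) extension, which is uniformly continuous at $B$ because every point $a_s$ entering the convex combination at $z$ satisfies $\rho(a_s,b)\le C\rho(z,b)$ for a fixed constant $C$. You then average it with the Haar integral, whose existence and location in $V$ are controlled by \cite[Theorems 3.20, 3.27]{WR}. Compare the finite sum $\frac{1}{|G|}\sum_{g} g^{-1}k(gx)$ in the finite-group convex-pair argument, and the Haar integral $\int_G f$ in the finite-dimensional $\mathrm{ri}(V)$ proposition.

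Two small remarks. First, in the last step you only need the trivial implication $G$-UAE $\Rightarrow$ $G$-UAR: extend the isometry $\iota^{-1}\colon \iota(V)\to V$ and compose the extension with $\iota$. It is worth saying so explicitly. Then no circularity can arise, even if the converse implication in \cite{A10} happens to rely on this very lemma. Second, in the paper's framework a $G$-UAR is by definition a metric $G$-space, so the norm of $L$ is tacitly $G$-invariant and $M=1$. Your Banach--Steinhaus bound, or the renorming by $\sup_{g}\|gv\|$, only covers a more general reading and is harmless.
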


	\begin{lemma}[\cite{A10}]\label{retractuniform}
		\rm{Let $(Z,d)$ be a metric $G$-space and $Y$ a closed $G$-subset of $Z$. If $Z$ is a $G$-UAE (respectively, a $G$-UANE) and $Y$ is a uniform $G$-retract (respectively, a uniform neighborhood $G$-retract) of $Z$, then $Y$ is a $G$-UAE (respectively, a $G$-UANE).}
	\end{lemma}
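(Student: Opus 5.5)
The plan is to compose an extension into the ambient space $Z$ with the retraction onto $Y$, and to check that uniform continuity at the relevant subsets survives the composition.

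Let $A$ be a closed $G$-subset of a metric $G$-space $(X,\rho)$ and let $f\colon A\to Y$ be a uniformly continuous $G$-map. The inclusion $i\colon Y\hookrightarrow Z$ is an isometric $G$-embedding, so $i\circ f\colon A\to Z$ is again a uniformly continuous $G$-map. Since $Z$ is a $G$-UAE (respectively, a $G$-UANE), there is a $G$-map $F\colon W\to Z$ extending $f$ and uniformly continuous at $A$. Here $W=X$ in the absolute case; in the neighborhood case $W$ is a $G$-neighborhood of $A$ with $B_\rho(A,\delta_W)\subset W$ for some $\delta_W>0$. Let $r\colon U\to Y$ be the $G$-retraction, uniformly continuous at $Y$, where $U=Z$ in the absolute case and otherwise $U$ is a $G$-neighborhood with $B_d(Y,\eta_0)\subset U$ for some $\eta_0>0$.

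In the neighborhood case we first shrink the domain. Put $W'=W\cap F^{-1}(U)$. This set is open because $F$ is continuous, and it is $G$-invariant because $F$ is equivariant and $U$ is invariant. It contains $A$, since $F(a)=f(a)\in Y\subset U$ for $a\in A$. To see that $W'$ is a uniform neighborhood of $A$, use uniform continuity of $F$ at $A$ to choose $\delta_0>0$ such that
\[
\rho(x,a)<\delta_0,\ a\in A \ \Longrightarrow\ d(F(x),f(a))<\eta_0 .
\]
For such $x$ we get $F(x)\in B_d(Y,\eta_0)\subset U$, and therefore $B_\rho(A,\min\{\delta_0,\delta_W\})\subset W'$. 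In the absolute case we simply take $W'=X$.

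Now define $\Phi=r\circ F\colon W'\to Y$. It is a $G$-map as a composition of $G$-maps. It extends $f$, because $r$ fixes $Y$ pointwise and so $r(F(a))=r(f(a))=f(a)$ for $a\in A$.

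For uniform continuity of $\Phi$ at $A$, let $\varepsilon>0$. Choose $\eta>0$ from the uniform continuity of $r$ at $Y$, so that $d(z,y)<\eta$ with $z\in U$ and $y\in Y$ implies $d(r(z),y)<\varepsilon$. Then choose $\delta>0$ from the uniform continuity of $F$ at $A$, so that $\rho(x,a)<\delta$ implies $d(F(x),f(a))<\eta$. Since $f(a)\in Y$ and $r(f(a))=f(a)$, we obtain $d(\Phi(x),\Phi(a))<\varepsilon$ whenever $x\in W'$, $a\in A$ and $\rho(x,a)<\delta$.

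This shows that $Y$ is a $G$-UAE (respectively, a $G$-UANE). The only step requiring care is checking that $W'$ is a uniform neighborhood. It works precisely because uniform continuity at $A$ controls distances from $F(x)$ to points of $Y$, which is exactly what the uniform neighborhood $U$ of $Y$ requires.
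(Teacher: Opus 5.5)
This lemma is quoted from \cite{A10} without proof, so the paper gives no argument to compare yours against. Your proof is correct and is the standard one: extend into $Z$, shrink the domain to $W\cap F^{-1}(U)$ using uniform continuity of $F$ at $A$ to reach the uniform neighborhood $B_d(Y,\eta_0)\subset U$, then compose with $r$.

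One harmless imprecision: $W\cap F^{-1}(U)$ need not be open if $U$ and $W$ are merely neighborhoods. This does not matter, because the definition only asks for a $G$-invariant set containing some $B_\rho(A,\delta)$, and you verify exactly that. Alternatively, you could replace $U$ and $W$ by their interiors, which are still invariant and still contain the relevant open balls.
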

	
	\begin{lemma}[\cite{A10}]\label{G-AR+G-UANR=G-UAR}
		\rm{Let $G$ be a compact group and $(Y,d)$ a metric $G$-space. Then $Y$ is a $G$-UAR if and only if $Y$ is a $G$-AR and a $G$-UANR.}
	\end{lemma}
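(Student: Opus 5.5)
The statement has two directions. Necessity follows from earlier results. For sufficiency, the plan is to take a uniform neighborhood $G$-retraction given by the $G$-UANR property and extend it to the whole ambient space using the $G$-AR property. The extension does not change the map near $Y$, so uniform continuity at $Y$ is preserved.

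\emph{Necessity.} Suppose $Y$ is a $G$-UAR. By the definitions, $Y$ is then a $G$-UANR: for a closed isometric $G$-embedding $Y\subset Z$, the uniform $G$-retraction $r\colon Z\to Y$ is defined on the uniform $G$-neighborhood $U=Z$ of $Y$. By Theorem~\ref{lemautil}(i), every $G$-UAR is a $G$-AR. So this direction needs no new argument.

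\emph{Sufficiency.} Suppose $Y$ is both a $G$-AR and a $G$-UANR, and fix a closed isometric $G$-embedding $Y\subset Z$ into a metric $G$-space $(Z,\rho)$.
\begin{enumerate}
\item By the $G$-UANR property there are $\delta>0$ and a $G$-retraction $r_1\colon U\to Y$ with $B_\rho(Y,\delta)\subset U$, where $U$ is a $G$-neighborhood of $Y$ and $r_1$ is uniformly continuous at $Y$.
\item Let $C=\{z\in Z\mid \rho(z,Y)\le \delta/2\}$. The function $z\mapsto\rho(z,Y)$ is continuous, and it is invariant because $\rho$ is invariant and $Y$ is a $G$-set. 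Hence $C$ is a closed $G$-subset of $Z$, and $C\subset B_\rho(Y,\delta)\subset U$.
\item Since $G$ is compact, being a $G$-AR is equivalent to being a $G$-AE, as recalled in Section~\ref{sec2}. So the $G$-map $r_1|_C\colon C\to Y$ extends to a $G$-map $R\colon Z\to Y$. Because $R$ extends $r_1|_C$ and $Y\subset C$, the map $R$ fixes $Y$ pointwise. Thus $R$ is a $G$-retraction of $Z$ onto $Y$.
\item To check uniform continuity at $Y$, let $\varepsilon>0$. Choose $\eta>0$ witnessing uniform continuity of $r_1$ at $Y$ for this $\varepsilon$, and put $\eta'=\min\{\eta,\delta/2\}$. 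Take $z\in Z$ and $y\in Y$ with $\rho(z,y)<\eta'$. Then $z\in C$, so $R(z)=r_1(z)$ and $R(y)=y=r_1(y)$. Therefore $d(R(z),R(y))<\varepsilon$.
\end{enumerate}
Hence $Y$ is a uniform $G$-retract of $Z$. Since the embedding was arbitrary, $Y$ is a $G$-UAR.

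The only point needing care is Step 3. There the extension property must be applied in the category of \emph{metrizable} $G$-spaces with an arbitrary closed $G$-subset, namely $C$, not just $Y$ itself; this is exactly what the $G$-AR $=$ $G$-AE equivalence for compact $G$ supplies. Everything else is direct bookkeeping with the invariant distance function $\rho(\cdot,Y)$.
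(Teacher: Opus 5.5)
Your proof is correct. Necessity follows from the definitions together with Theorem~\ref{lemautil}(i). For sufficiency, you restrict the uniform neighborhood $G$-retraction $r_1$ to the closed invariant set $C=\{z\mid \rho(z,Y)\le\delta/2\}$ and extend it to all of $Z$ by the $G$-AE property, which holds for compact $G$. The extension agrees with $r_1$ on the $\delta/2$-neighborhood of $Y$, and that gives uniform continuity at $Y$. The paper gives no proof of this lemma, since it is quoted from \cite{A10}; your argument is the natural equivariant transcription of Michael's proof that an AR which is a UANR is a UAR.
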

	
	\begin{lemma}[\cite{A10}]\label{G-UANRpreserveHD}
		\rm{Let $G$ be a compact group, $(Y,d)$ a metric $G$-space and  $Z$  a $G$-homotopy dense subset of $Y$. If $Y$ is a $G$-UANE then $Z$ is a $G$-UANE.}
	\end{lemma}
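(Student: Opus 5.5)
The strategy is to obtain an extension into $Y$ first, and then push it into $Z$ along a $G$-homotopy whose displacement is controlled by the distance to the closed set. Let $A$ be a closed $G$-subset of a metric $G$-space $(M,\rho)$ and let $f\colon A\to Z$ be a uniformly continuous $G$-map.

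\emph{Extension into $Y$.} Regard $f$ as a map into $Y$. Since $Y$ is a $G$-UANE, there are a uniform $G$-neighborhood $U\supset B_\rho(A,\delta)$ and a $G$-extension $F\colon U\to Y$ of $f$ that is uniformly continuous at $A$. Let $H\colon Y\times I\to Y$ be a $G$-homotopy with $H_0=\mathrm{id}_Y$ and $H(Y\times(0,1])\subset Z$.

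\emph{Controlled time parameter.} Define $\omega\colon Y\times I\to[0,\infty)$ by $\omega(y,t)=\sup\{d(H(y,t'),y)\mid 0\le t'\le t\}$. This is a supremum over the compact set $[0,t]$ of a jointly continuous function, so $\omega$ is continuous. It is nondecreasing in $t$ and vanishes at $t=0$. It is $G$-invariant in $y$, because $H$ is equivariant and $d$ is invariant. Next put, for $u\in U\setminus A$,
\[
g(u)=\sup\{t\in[0,1]\mid \omega(F(u),t)<\rho(u,A)\}.
\]
Then $g>0$ on $U\setminus A$, since $\omega(F(u),0)=0<\rho(u,A)$. Moreover $g$ is lower semicontinuous there: if $t<g(u)$, then the strict inequality $\omega(F(u'),t)<\rho(u',A)$ persists for $u'$ near $u$. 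Finally, $g$ is $G$-invariant. By the standard insertion result for positive lower semicontinuous functions on metric spaces, there is a continuous $\tau_0\colon U\setminus A\to(0,1]$ with $\tau_0\le g$. Averaging over the normalized Haar measure of the compact group $G$, $\tau(u)=\int_G\tau_0(hu)\,dh$, produces a continuous invariant function that still satisfies $0<\tau\le g$, because $g$ is invariant. Set $\tau=0$ on $A$. Since $\tau\le g$ forces $\omega(F(u),\tau(u))\le\rho(u,A)$, we get continuity of $\tau$ at points of $A$ only if we also impose $\tau\le\rho(\cdot,A)$, so I would replace $\tau$ by $\min\{\tau,\rho(\cdot,A)\}$.

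\emph{The extension into $Z$.} Define $R(u)=H(F(u),\tau(u))$ for $u\in U$. This map is a continuous $G$-map, since $H$, $F$ and $\tau$ are equivariant or invariant and continuous. On $A$ it equals $f$, and off $A$ its values lie in $H(Y\times(0,1])\subset Z$. By construction,
\[
d\big(R(u),F(u)\big)\le\omega\big(F(u),\tau(u)\big)\le\rho(u,A).
\]
Hence for $a\in A$ and $\rho(u,a)<\eta$ we have $d(R(u),f(a))\le d(F(u),f(a))+\eta$. Uniform continuity of $F$ at $A$ then gives uniform continuity of $R$ at $A$, and $U$ is a uniform $G$-neighborhood, so $Z$ is a $G$-UANE.

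The main obstacle is the middle step: we need a single invariant continuous time parameter $\tau$ that vanishes exactly on $A$ while keeping the displacement $d(H(y,\tau),y)$ below $\rho(u,A)$. Such control is needed because $H$ is only pointwise, not uniformly, continuous at $Y\times\{0\}$. The lower semicontinuity of $g$ and the Haar averaging are the points to verify carefully.
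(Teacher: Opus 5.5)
The paper does not prove this lemma; it is quoted from \cite{A10}, so there is no in-paper argument to compare against. Your proof is correct on its own terms.

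\textbf{Relation to the paper.} Your argument is the uniform version of the construction the paper uses in the converse half of Theorem~\ref{G-pair and G-homotopy dense}, where the time parameter is simply $\sigma(u,B)$. You correctly see why that no longer suffices: $d(H(y,t),y)$ is not small uniformly in $y$ for small $t$. You fix this by building an invariant continuous time parameter $\tau$ with $d(H(F(u),\tau(u)),F(u))\le\rho(u,A)$.

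\textbf{Checking the steps.} Each step holds up.
\begin{enumerate}
\item The modulus $\omega(y,t)=\max_{s\in[0,1]}d(H(y,st),y)$ is continuous by compactness of $[0,1]$. It is invariant because $H$ is equivariant and $d$ is invariant.
\item Monotonicity of $\omega$ in $t$ makes $\{t\in[0,1]:\omega(F(u),t)<\rho(u,A)\}$ an initial segment. This gives both the lower semicontinuity of $g$ and the bound $\omega(F(u),\tau(u))\le\rho(u,A)$ whenever $\tau\le g$. Continuity in $t$ covers the endpoint case $\tau(u)=g(u)$.
\item The partition-of-unity insertion preserves positivity off $A$. Haar averaging gives $\tau\le g$ because $g(hu)=g(u)$, which uses invariance of both $A$ and $\rho$. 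Truncation by $\rho(\cdot,A)$ also keeps $\tau$ positive off $A$.
\item The estimate $d(R(u),f(a))\le d(F(u),f(a))+\rho(u,a)$ gives uniform continuity at $A$.
\end{enumerate}

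\textbf{One muddled sentence.} Your remark about continuity of $\tau$ at $A$ mixes two separate roles. The bound $\tau\le g$ controls the displacement, while $\tau\le\rho(\cdot,A)$ is what gives continuity at points of $A$. Your fix, replacing $\tau$ by $\min\{\tau,\rho(\cdot,A)\}$, is right.

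\textbf{An alternative organization.} You could first prove a statement about $Y$ alone: $H$ can be replaced by a $G$-homotopy $\tilde H$ with $\tilde H_0=\mathrm{id}_Y$, $\tilde H(Y\times(0,1])\subset Z$ and $d(\tilde H(y,t),y)\le t$. This follows from the same insertion-and-averaging trick, carried out on $Y\times(0,1]$ with $t$ in place of $\rho(u,A)$. Then $R(u)=\tilde H(F(u),\min\{1,\rho(u,A)\})$ works with no further bookkeeping. That version isolates the technical step in a lemma reusable for other uniform statements. Your version is self-contained and tailored to the single extension problem at hand.
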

	
	\begin{lemma}\label{functionhomotopydense}
		\rm{Let $X$ be a compact $G$-space and $Y$ a topological space. If $A\subset Y$ is homotopy dense in $Y$, then $C(X,A)$ is $G$-homotopy dense in $C(X,Y)$.}
	\end{lemma}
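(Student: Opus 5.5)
Let $F\colon Y\times I\to Y$ be a homotopy with $F_0=\mathrm{id}_Y$ and $F(Y\times(0,1])\subset A$, as guaranteed by the hypothesis that $A$ is homotopy dense in $Y$. The plan is to push this homotopy forward to function spaces by post-composition. Define
\[
K\colon C(X,Y)\times I\to C(X,Y),\qquad K(f,t)=F_t\circ f ,
\]
so that $K(f,t)(x)=F(f(x),t)$. Then $K(f,0)=f$, and for $t>0$ we have $K(f,t)(X)\subset F(Y\times\{t\})\subset A$. Hence $K(f,t)\in C(X,A)$, where $C(X,A)$ is regarded as the subspace of $C(X,Y)$ of maps with image in $A$; since the compact-open topology on $C(X,A)$ agrees with the subspace topology, nothing further is needed there. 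Equivariance holds because the action acts on the source while $F$ acts on the target:
\[
K(g\cdot f,t)(x)=F\bigl(f(g^{-1}x),t\bigr)=\bigl(g\cdot K(f,t)\bigr)(x).
\]
In particular $C(X,A)$ is a $G$-subset, since $g\cdot f$ has the same image as $f$.

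The only real work is the joint continuity of $K$ in the compact-open topology. Note that $Y$ is only a topological space here, so no metric argument is available. Fix $(f_0,t_0)$ and a subbasic neighborhood $[C,V]$ of $K(f_0,t_0)$, where $C\subset X$ is compact and $V\subset Y$ is open. Then the compact set $f_0(C)\times\{t_0\}$ lies in the open set $F^{-1}(V)$. By the tube lemma (Wallace's theorem) there are an open $W\supset f_0(C)$ in $Y$ and an open interval $J\ni t_0$ in $I$ with $W\times J\subset F^{-1}(V)$. Consequently $[C,W]\times J$ is a neighborhood of $(f_0,t_0)$ that $K$ maps into $[C,V]$. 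Since subbasic sets suffice to check continuity, $K$ is continuous.

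This continuity step is the expected main obstacle, and compactness of the sets $C$ defining the compact-open topology is exactly what makes the tube lemma applicable. Compactness of $X$ is not needed for this step; it serves only to make the $G$-action on $C(X,Y)$ continuous, which was noted earlier in the paper. With $K$ a continuous $G$-map satisfying $K_0=\mathrm{id}$ and $K(C(X,Y)\times(0,1])\subset C(X,A)$, it follows that $C(X,A)$ is $G$-homotopy dense in $C(X,Y)$.
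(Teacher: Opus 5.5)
Your proof is correct. It uses the same homotopy $K(f,t)=F_t\circ f$ as the paper, with the same checks of $K_0=\mathrm{id}$, of $K(C(X,Y)\times(0,1])\subset C(X,A)$, and of equivariance.

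The only difference is how you prove continuity. The paper first notes that $J(p,x,t)=F(p(x),t)$ is continuous on $C(X,Y)\times X\times I$. This relies on the evaluation map $C(X,Y)\times X\to Y$ being continuous, which is where the paper uses compactness of $X$. The paper then passes to the adjoint $K$ via the exponential law. You instead show directly, using Wallace's theorem, that preimages of subbasic sets $[C,V]$ are open.

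Your route is more elementary and more general: it shows $K$ is continuous for an arbitrary space $X$, so (local) compactness of $X$ is needed only to make the $G$-action on $C(X,Y)$ continuous, as you point out. The paper's route is shorter, given the standard evaluation-map and exponential-law facts.
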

	
	\begin{proof}
		There exists a  homotopy $F:Y\times I\rightarrow Y$ such that $F_0=\text{id}_Y$ and $F(Y\times (0,1])\subset A$. Define a map $J:C(X,Y)\times X\times I \rightarrow Y\times I$ by
		
		\begin{center}
			$J(p,x,t)=F\big(p(x),t\big),\quad p\in C(X,Y),\quad x\in X,\quad t\in I$.
		\end{center}
		
		By the compactness of $X$, this map is continuous. Therefore, the map $K:C(X,Y)\times I\rightarrow C(X,Y)$ defined by
		
		\begin{center}
			$K(p,t)(x)=F\big(p(x),t\big),\quad p\in C(X,Y),\quad x\in X,\quad t\in I$
		\end{center}
		is continuous. We now verify that $K$ is the desired $G$-homotopy.
		
		For each $p\in C(X,Y)$ and $x\in X$, we have
		\begin{center}
			$K(p,0)(x)=F\big(p(x),0\big)=p(x)$,
		\end{center}
		then $K(p,0)=p$, and thus $K_0=\text{id}_{C(X,Y)}$. Moreover, if $t>0$, for each $x\in X$ we have $K(p,t)(x)=F\big(p(x),t\big)\in A$, which shows that $K(C(X,Y)\times (0,1])\subset C(X,A)$.
		
		It remains to show that $K$ is a $G$-map. Let $g\in G$, $t\in I$ and $p\in C(X,Y)$. Then, for every $x\in X$,
		
		\begin{center}
			$K_t(g\cdot p)(x)=F_t\big((g\cdot p)(x)\big)=K_t(p)(g^{-1} x)=\big(g\cdot K_t(p)\big)(x)$.
		\end{center}
		This shows that $K_t(g\cdot p)=g\cdot K_t(p)$, and hence $K$ is equivariant. Therefore, $C(X,A)$ is $G$-homotopy dense in $C(X,Y)$.
	\end{proof}
	
	\begin{lemma}\label{ejemplouniformretract}
		\rm{Let $X$ be a compact $G$-space and $(Y,d)$ a metric space. If $B$ is a uniform retract (respectively, a uniform neighborhood retract) of $Y$, then $C(X,B)$ is a uniform $G$-retract (respectively, a uniform neighborhood $G$-retract) of $(C(X,Y),d^*)$.}
	\end{lemma}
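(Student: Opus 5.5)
The natural approach is to compose with the given retraction. Let $r\colon U\to B$ be a retraction that is uniformly continuous at $B$, where $U$ is a uniform neighborhood of $B$ in $Y$: there is $\delta_0>0$ with $B_d(B,\delta_0)\subset U$. In the retract case, $U=Y$. Set
\[
\widetilde U=\{p\in C(X,Y)\mid p(X)\subset U\}, \qquad R(p)=r\circ p \quad (p\in\widetilde U).
\]
We first check that $C(X,B)$ is a closed $G$-subset of $C(X,Y)$. It is invariant, since $(g\cdot f)(X)=f(X)$. It is closed in the $d^*$-topology, because uniform limits of maps into the closed set $B$ take values in $B$. (Since $B$ is a uniform neighborhood retract, it is a retract of a neighborhood, hence closed in $Y$.)

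Next, $\widetilde U$ is a uniform $G$-neighborhood of $C(X,B)$. If $q\in C(X,B)$ and $d^*(p,q)<\delta_0$, then $d(p(x),q(x))<\delta_0$ for every $x$. Hence $p(x)\in B_d(B,\delta_0)\subset U$, so $B_{d^*}(C(X,B),\delta_0)\subset\widetilde U$. The set $\widetilde U$ is $G$-invariant for the same reason that $C(X,B)$ is. It is open, because $p(X)$ is compact; this is the only point where I need that $\widetilde U$ itself, not just the ball, is a neighborhood. If preferred, one may simply replace $\widetilde U$ by the ball $B_{d^*}(C(X,B),\delta_0)$, which is open and invariant because $d^*$ is $G$-invariant: $d^*(g\cdot p,g\cdot q)=d^*(p,q)$ after the substitution $x\mapsto g^{-1}x$.

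The map $R$ is well defined, with $R(p)\in C(X,B)$. It is a retraction, since $r\circ q=q$ whenever $q(X)\subset B$. It is equivariant because
\[
R(g\cdot p)(x)=r\bigl(p(g^{-1}x)\bigr)=\bigl(g\cdot R(p)\bigr)(x).
\]

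The main obstacle is continuity of $R$ on $\widetilde U$, since $r$ is only continuous, not uniformly continuous, away from $B$. I would handle it by compactness. Fix $p\in\widetilde U$ and $\varepsilon>0$. For each $y\in p(X)$, choose $\eta_y$ with $d(r(z),r(y))<\varepsilon/2$ whenever $d(z,y)<\eta_y$. Cover the compact set $p(X)$ by finitely many balls $B(y_i,\eta_{y_i}/2)$ and let $\eta=\min_i\eta_{y_i}/2$. This Lebesgue-number argument gives $d(r(z),r(p(x)))<\varepsilon$ whenever $d(z,p(x))<\eta$. Consequently, for $p'\in\widetilde U$ with $d^*(p,p')<\eta$ we get $d^*(R(p),R(p'))\le\varepsilon$.

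Finally, $R$ is uniformly continuous at $C(X,B)$. Given $\varepsilon>0$, take $\delta\le\delta_0$ such that $d(r(y),r(b))=d(r(y),b)<\varepsilon$ whenever $b\in B$ and $d(y,b)<\delta$. Then, for $q\in C(X,B)$ and $d^*(p,q)<\delta$, applying this pointwise with $b=q(x)$ and $y=p(x)$ and taking the supremum over the compact $X$ gives
\[
d^*\bigl(R(p),R(q)\bigr)=\sup_{x\in X} d\bigl(r(p(x)),q(x)\bigr)\le\varepsilon.
\]
Replacing $\varepsilon$ by $\varepsilon/2$ gives the strict inequality. This proves the neighborhood version, and the retract version follows with $U=Y$ and $\widetilde U=C(X,Y)$.
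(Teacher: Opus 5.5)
Your proposal is correct and is the same argument as the paper's: you take $C(X,U)$ as the uniform $G$-neighborhood, set $R(p)=r\circ p$, and get uniform continuity at $C(X,B)$ by applying the modulus of $r$ at $B$ pointwise in $x$. You also supply details the paper leaves implicit, namely continuity of $R$ via compactness of $p(X)$ and openness and invariance of the neighborhood. One small correction: your parenthetical reason for $B$ being closed is off as stated, since a retract of an arbitrary neighborhood need not be closed in $Y$. This does not matter, because closedness is part of the definition of a uniform neighborhood retract.
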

	
	\begin{proof}
		
	Suppose that $B$ is a uniform neighborhood retract of $Y$.
	
	Let $U$ be a uniform neighborhood of $B$ in $Y$, and let $r: U\rightarrow B$ be a retraction that is uniformly continuous at $B$. Notice that $C(X,U)$ is a uniform $G$-neighborhood of $C(X,B)$ in $C(X,Y)$.
	
	Define a continuous map $R:C(X,U)\rightarrow C(X,B)$ by
	
	\begin{center}
		$R(p)=r\circ p,\quad$ $p\in C(X,U)$.
	\end{center}
	
	For each $p\in C(X,U)$, $g\in G$ and $x\in X$, we have
	\begin{center}
		$R(g\cdot p)(x)=r\big(p(g^{-1} x)\big)=(r\circ p)(g^{-1} x)=\big(g\cdot R(p)\big)(x)$,
	\end{center}
	showing that $R(g\cdot p)=g\cdot R(p)$. Hence, $R$ is a $G$-retraction.
	
	We now verify that $R$ is uniformly continuous at $C(X,B)$. Given $\varepsilon > 0$, choose $\delta > 0$ such that $d(r(u),b)<\frac{\varepsilon}{2}$ for all $u\in U$, $b\in B$ satisfying $d(u,b)<\delta$.
	
	Let $p\in C(X,U)$ and $q\in C(X,B)$ with $d^*(p,q)<\delta$. For each $x\in X$, we have $d(p(x),q(x))<\delta$, which implies $d(r(p(x)),q(x))<\frac{\varepsilon}{2}$. Consequently, $d^*(R(p),q)\leq \frac{\varepsilon}{2}<\varepsilon$. Therefore, $R$ is uniformly continuous at $C(X,B)$, as required.
		
	\end{proof}
	
	\noindent\textit{Proof of Theorem \ref{functionC(X,Y)GUANR}.}
		Suppose that $Y$ is a UANR, and let $(\widetilde{Y},d)$ be the metric completion of $Y$. By \cite[Theorem 2]{KS1}, $\widetilde{Y}$ is itself a UANR, and $Y$ is homotopy dense in $\widetilde{Y}$.
		
		The Arens-Eells Embedding Theorem (\cite[Theorem 6.2.1]{KS2}) guarantees the existence of a Banach space $L$ containing $\widetilde{Y}$ as a closed subset. Since $\widetilde{Y}$ is a uniform neighborhood retract of $L$, it follows from Lemma \ref{ejemplouniformretract} that the $G$-space $C(X,\widetilde{Y})$ is a  uniform neighborhood $G$-retract of $C(X,L)$.

		Now, Lemma \ref{UAE grupo compacto no Lie} shows that the normed linear $G$-space $C(X,L)$ is a $G$-UAR. Consequently, Lemma \ref{retractuniform} implies that $C(X,\widetilde{Y})$ is a $G$-UANR. In addition, by Lemma \ref{functionhomotopydense}, $C(X,Y)$ is $G$-homotopy dense in $C(X,\widetilde{Y})$. Therefore, by Lemma \ref{G-UANRpreserveHD}, the function space $C(X,Y)$ is a $G$-UANR.
		
		Next, assume that $Y$ is a UAR. Then $C(X,Y)$ is a $G$-UANR, and by \cite[Theorem 8]{A3}, $C(X,Y)$ is also a $G$-AR. Hence, by Lemma \ref{G-AR+G-UANR=G-UAR}, it follows that $C(X,Y)$ is a $G$-UAR.
		
		Conversely, suppose that $C(X,Y)$ is a $G$-UAR (respectively, a $G$-UANR). Since $C(X,Y)$ is a UAR (respectively, a UANR), and $Y$ is isometric to a uniform retract of $C(X,Y)$, it follows that $Y$ is a UAR (respectively, a UANR). This finishes the proof.
		
	\qed
	

	\section{Applications to Lawson \texorpdfstring{$G$-}-semilattices}\label{sec4}
	
	A topological semilattice is a topological space $S$ equipped with a continuous operation $\cdot: S\times S\rightarrow S$ that is idempotent, commutative, and associative (i.e., $x\cdot x=x$, $x\cdot y=y\cdot x$ and $x\cdot (y\cdot z)=(x\cdot y)\cdot z$). Moreover, $S$ is called a Lawson semilattice if it has a basis of subsemilattices, that is, there exists a basis $\mathcal{B}$ for the topology of $S$ such that each $B\in \mathcal{B}$ is a subsemilattice of $S$.  
	
	A topological $G$-semilattice is a topological semilattice $S$ endowed with a continuous action of $G$ such that the multiplication map is equivariant; that is, for all $x,y \in S$ and $g \in G$, the following condition holds:
	\begin{center}
		$g\cdot(xy)=(g\cdot x)(g\cdot y)$.
	\end{center}
	
	If, in addition, $S$ is a Lawson semilattice, then $S$ is called a Lawson $G$-semilattice. An invariant subsemilattice of $S$ is said to be a $G$-subsemilattice.
			
	\begin{proposition}\label{Cone(X)semilattice}
		\rm{Let $S$ be a $G$-semilattice, and let $\mathrm{Cone}(S)$ be the cone of $S$ equipped with the $G$-action $g\cdot sx=s(gx)$. Consider the map 
		$$\cdot: \mathrm{Cone}(S)\times \mathrm{Cone}(S)\rightarrow \mathrm{Cone}(S)$$
		 defined by 
		 $$sx\cdot ty=\mbox{min}\{s,t\}(xy).$$ 
		 Then the following conditions are satisfied:
			
			\begin{enumerate}
				\item [(i)] $(\mathrm{Cone}(S),\cdot)$ is a $G$-semilattice.
				\item [(ii)] If $S$ is a Lawson $G$-semilattice, then so is $\mathrm{Cone}(S)$.
		\end{enumerate}}
	\end{proposition}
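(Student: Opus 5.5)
We first check that the operation is well defined on $\mathrm{Cone}(S)$. The only identifications occur at the vertex, where $0x=\theta$ for all $x$. If $s=0$, then $\min\{s,t\}=0$, so $sx\cdot ty=\theta$ regardless of the representatives chosen. Hence the formula does not depend on representatives, and $\theta$ acts as a zero element.

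For (i) we verify the semilattice laws and equivariance directly.
\begin{itemize}
\item Idempotence: $sx\cdot sx=s(xx)=sx$.
\item Commutativity: this follows from commutativity of $\min$ and of the product in $S$.
\item Associativity: both bracketings equal $\min\{s,t,u\}(xyz)$.
\item Equivariance: $g(sx\cdot ty)=\min\{s,t\}(g(xy))=\min\{s,t\}((gx)(gy))=(g\cdot sx)(g\cdot ty)$.
\end{itemize}

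The real content of (i) is continuity of the multiplication for the weak topology. We argue as in Lemma \ref{lemmacone(A)GHDcone(X)}.

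\emph{Away from the vertex.} At a pair $(sx,ty)$ with $s,t>0$, the map is induced from the continuous map
$$(s,x,t,y)\mapsto(\min\{s,t\},xy)$$
on $((0,1]\times S)^2$. The restriction of $\eta$ to $(0,1]\times S$ is an open embedding onto $\mathrm{Cone}(S)\setminus\{\theta\}$ in the weak topology. The preimage of a basic open set around $\min\{s,t\}(xy)$ therefore pulls back to a neighbourhood of the pair, exactly as in the second half of the proof of Lemma \ref{lemmacone(A)GHDcone(X)}.

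\emph{At pairs involving the vertex.} Suppose, say, $s=0$, so the product is $\theta$. Let $U$ be a neighbourhood of $\theta$, and pick $\varepsilon$ with $\eta([0,\varepsilon)\times S)\subset U$. Then $W=\eta([0,\varepsilon)\times S)$ is a neighbourhood of $\theta$, and $W\cdot\mathrm{Cone}(S)\subset W\subset U$, because $\min\{s',t'\}<\varepsilon$ whenever $s'<\varepsilon$. This settles continuity.

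For (ii) we have to produce a basis of subsemilattices.

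\emph{Neighbourhoods of the vertex.} The sets $\eta([0,\varepsilon)\times S)$ form a neighbourhood base at $\theta$ in the weak topology. Each is a subsemilattice, since $\min$ of two numbers below $\varepsilon$ is below $\varepsilon$.

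\emph{Points $ax$ with $a>0$.} Given a neighbourhood $V$ of $ax$, choose an interval $(a-\delta,a+\delta)\cap(0,1]$ and an open set $B\ni x$ of $S$ with $\eta\big(((a-\delta,a+\delta)\cap I)\times B\big)\subset V$. Shrink $B$ to a member of the Lawson basis of $S$, so that $B$ is a subsemilattice containing $x$. Then $\eta(J\times B)$, with $J=(a-\delta,a+\delta)\cap I$ and $a-\delta>0$, is open in $\mathrm{Cone}(S)$ and closed under the operation. Indeed, $\min$ of two numbers in $J$ lies in $J$ since $J$ is an interval, and $xy\in B$. Together these sets form a basis of subsemilattices, and with (i) this shows $\mathrm{Cone}(S)$ is a Lawson $G$-semilattice.

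The main obstacle is the continuity argument at pairs involving the vertex, because the weak topology is not the quotient topology. The key point is the uniform estimate $W\cdot\mathrm{Cone}(S)\subset W$, which needs no compactness of $S$.
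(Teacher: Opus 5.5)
Your proposal is correct and follows essentially the same route as the paper. Continuity is split into a non-vertex case and a vertex case, with the key inclusion $\eta([0,\varepsilon)\times S)\cdot\mathrm{Cone}(S)\subset\eta([0,\varepsilon)\times S)$ handling the vertex, and the Lawson basis is formed by the sets $\eta([0,\varepsilon)\times S)$ together with the sets $\eta(J\times B)$, where $B$ is a basic subsemilattice of $S$. Your version is in fact slightly more careful than the paper's: you check well-definedness, idempotence and commutativity, you use the open embedding $(0,1]\times S\to\mathrm{Cone}(S)\setminus\{\theta\}$ for the non-vertex case, and you intersect the interval with $I$ so that points with $s=1$ are covered, whereas the paper's condition $s+\delta<1$ quietly excludes them.
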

	
	\begin{proof}
		
		(i) We first prove that the multiplication is continuous. To this end, let $\eta: I\times S\rightarrow \mathrm{Cone}(S)$ denote the quotient map. Take $sx, ty\in \mathrm{Cone}(S)$, and let $W$ be an open neighborhood of $sx\cdot ty=\text{min}\{s,t\}(xy)$ in $\mathrm{Cone}(S)$. We consider two cases:
		\begin{itemize}
			\item \textbf{Case 1}: $sx\neq \theta\neq ty$. In this case, we have $\text{min}\{s,t\}(xy)\neq \theta$, and thus, $\eta^{-1}(W)$ is an open neighborhood of $(\text{min}\{s,t\}, xy)$ in $I\times S$. Therefore, there exist $\delta>0$ and neighborhoods $V_x$, $V_y$ of $x$ and $y$, respectively, such that $0<s-\delta<s+\delta<1$, $0<t-\delta<t+\delta<1$ and $\text{min}\big((s-\delta,s+\delta)\times (t-\delta,t+\delta)\big)\times V_xV_y\subset \eta^{-1}(W)$. 
			
			Now, set
			
			\begin{center}
				$U_x=\eta\big((s-\delta,s+\delta)\times V_x\big)\quad$ and $\quad U_y=\eta\big((t-\delta,t+\delta)\times V_y\big)$.
			\end{center}
			
			Then $U_x$ and $U_y$ are neighborhoods of $sx$ and $ty$ in $\mathrm{Cone}(S)$, respectively, and satisfy $U_x\cdot U_y\subset W$. Hence, the multiplication is continuous at $(sx,ty)$.
			
			\item \textbf{Case 2}: $sx=\theta$ or $ty=\theta$. Without loss of generality, suppose that $sx=\theta$. By definition of the multiplication, we have $sx\cdot ty=\theta$. Hence, there exists $\varepsilon > 0$ such that $[0,\varepsilon)\times S\subset \eta^{-1}(W)$. Notice that $\eta([0,\varepsilon)\times S)$ and $\mathrm{Cone}(S)$ are open neighborhoods of $sx$ and $ty$, respectively, and moreover $\eta([0,\varepsilon)\times S)\cdot \mathrm{Cone}(S)\subset W$. Thus, the multiplication is continuous at $(sx,ty)$.
		\end{itemize}

		On the other hand, the multiplication is associative. Indeed, take $rx,sy,tz\in \mathrm{Cone}(S)$, and let us prove that $rx\cdot(sy\cdot tz)=(rx\cdot sy)\cdot tz$. If $\theta \in \{rx, sy, tz\}$, then both sides equal $\theta$. Otherwise,
		\begin{center}
			$rx\cdot(sy\cdot tz)=\mbox{min}\{r,\mbox{min}\{s,t\}\}(x(yz))=\mbox{min}\{\mbox{min}\{r,s\},t\}((xy)z)=(rx\cdot sy)\cdot tz$.  
		\end{center}
		
		Finally, we verify equivariance. Given $g\in G$ and $rx,sy\in \mathrm{Cone}(S)$, we have
		
		\begin{center}
			$g\cdot (rx\cdot sy)=g\cdot (\mbox{min}\{r,s\}(xy))=\mbox{min}\{r,s\}(g\cdot (xy))=\mbox{min}\{r,s\}((gx)\cdot (gy))=(g\cdot rx)(g\cdot sy)$.
		\end{center}
		Therefore, $(\mathrm{Cone}(S),\cdot)$ is a topological $G$-semilattice.   
		
		(ii) It suffices to prove that any point of $\mathrm{Cone}(S)$ admits a neighborhood basis consisting of subsemilattices of $\mathrm{Cone}(S)$. Let $sx\in \mathrm{Cone}(S)$ and let $V$ be an open neighborhood of $sx$.
		
		\begin{itemize}
			\item \textbf{Case 1}: $sx\neq \theta$. In this case, $s > 0$, and hence $\eta^{-1}(V)$ is an open neighborhood of $(s,x)$ in $I\times S$. Since $S$ is a Lawson semilattice, there exists a neighborhood $U$ of $x$ in $S$ such that $U$ is a semilattice and $(s-\varepsilon,s+\varepsilon)\times U\subset \eta^{-1}(V)$, for some $\varepsilon>0$ with  $0<s-\varepsilon<s+\varepsilon<1$. Then $\eta\big((s-\varepsilon,s+\varepsilon)\times U\big)$ is a neighborhood of $sx$, contained in $V$, which is itself a subsemilattice of $\mathrm{Cone}(S)$.
			
			\item \textbf{Case 2}: $sx=\theta$. In this case, there exists $\delta>0$ such that $[0,\delta)\times S\subset \eta^{-1}(V)$. Thus, $U=\eta([0,\delta)\times S)$ is the desired neighborhood of $sx$.
		\end{itemize}
	\end{proof}
		
	A Lawson metric $G$-semilattice is a triple $(S,d,\cdot)$, where $(S,\cdot)$ is a Lawson $G$-semilattice and $d$ is an admissible invariant metric on $S$ satisfying: $d(a\cdot b,a'\cdot b')\leq\mbox{max}\{d(a,a'),d(b,b')\}$ for all $a,a',b,b'\in S$. When $G=\{e\}$, this definition reduces to the notion of a Lawson metric semilattice (see, e.g., \cite{Banakh1,KSakaiY}).
	
	Let $G$ be a compact group and let $(S,d,\cdot)$ be a Lawson metric semilattice equipped with an action of $G$. We prove that there exists an admissible metric $\rho$ on $S$ such that $(S,\rho,\cdot)$ is a Lawson metric $G$-semilattice. The function $\rho:S\times S\rightarrow \mathbb{R}$ defined by 
	\begin{center}
		$\rho(x,y)=\text{sup}\, \{d(gx,gy)\mid g\in G\},\quad$  $x,y\in S$
	\end{center}
	is an invariant admissible metric on $S$. Thus, it remains to check that 
	
	\begin{center}
		$\rho(a b,a' b')\leq\mbox{max}\{\rho(a,a'),\rho(b,b')\}\quad$ for all $a,a',b,b'\in S$.
	\end{center}
	
	For each $g\in G$ we have
	\begin{center}
		$d\big(g(ab),g(a'b')\big)=d\big((ga)(gb),(ga')(gb')\big)\leq \mbox{max}\{d(ga,ga'),d(gb,gb')\}\leq \mbox{max}\{\rho(a,a'),\rho(b,b')\}$,
	\end{center}
	and therefore, 
	\begin{center}
		$\rho(a b,a' b')=\text{sup}\,\{d\big(g(ab),g(a'b')\big)\mid g\in G\}\leq\mbox{max}\{\rho(a,a'),\rho(b,b')\}$ 
	\end{center}
	which shows that $(S,\rho,\cdot)$ is a Lawson metric $G$-semilattice.
	
	\begin{proposition}\label{FuncspacesLawonmetric}
		\rm{Let $S$ be a topological semilattice and $X$ a locally compact $G$-space. Consider the binary operation 
		$$\cdot: C(X,S)\times C(X,S)\rightarrow C(X,S)$$
		 defined by
			
			\begin{center}
				$(p\cdot q)(x)=p(x)q(x),\quad$ $p,q\in C(X,S),\ \ $ $x\in X$. 
			\end{center}
			
			Then the following conditions are satisfied:
			
			\begin{enumerate}
				\item [(i)] $(C(X,S),\cdot)$ is a topological $G$-semilattice.
				\item [(ii)] If $S$ is a Lawson semilattice, then $C(X,S)$ is a Lawson $G$-semilattice.
				\item [(iii)] If $(S,d,\cdot)$ is a Lawson metric semilattice and $X$ is compact, then $(C(X,S),d^*,\cdot)$ is a Lawson metric $G$-semilattice.
		\end{enumerate}}
	\end{proposition}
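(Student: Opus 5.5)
We would verify the three items in order, reducing each to a pointwise property of $S$ together with a standard fact about the compact-open topology.

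For (i), the algebraic identities (idempotency, commutativity, associativity) hold pointwise, so they pass to $C(X,S)$ at once. Equivariance also comes from a pointwise computation:
$$\big(g\cdot(pq)\big)(x)=p(g^{-1}x)q(g^{-1}x)=\big((g\cdot p)(g\cdot q)\big)(x).$$
Continuity of the action on $C(X,S)$ is \cite[Proposition 4]{A3}, since $X$ is locally compact. The real work is continuity of the multiplication. We would use the homeomorphism $C(X,S)\times C(X,S)\cong C(X,S\times S)$, which holds for the compact-open topology. Under it, multiplication becomes composition with the continuous map $\mu:S\times S\to S$, that is $h\mapsto \mu\circ h$. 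Composition on the left with a fixed continuous map is always continuous in the compact-open topology, because the preimage of $[K,V]$ is $[K,\mu^{-1}(V)]$.

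For (ii), we would show that $C(X,S)$ has a subbasis of subsemilattices. Finite intersections of subsemilattices are again subsemilattices, so we would then obtain a basis of subsemilattices. Take a subbasic open set $[K,V]$ and a point $p\in[K,V]$. The set $p(K)$ is compact and lies in $V$, and $S$ has a basis of open subsemilattices. Hence we can cover $p(K)$ by finitely many open subsemilattices $W_1,\dots,W_n\subset V$. The difficulty is that the union $W_1\cup\dots\cup W_n$ need not be a subsemilattice. To get around this, we would use normality of the compact set $K$ to shrink: choose compact sets $K_i\subset K$ with $K=\bigcup_i K_i$ and $p(K_i)\subset W_i$. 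Then $\bigcap_i [K_i,W_i]$ is a neighborhood of $p$ contained in $[K,V]$, and each $[K_i,W_i]$ is a subsemilattice, since $p(x),q(x)\in W_i$ implies $p(x)q(x)\in W_i$. We expect this step to be the main obstacle. In particular, to produce the compact sets $K_i$ we would apply a shrinking of the open cover $\{p^{-1}(W_i)\cap K\}$ of the compact Hausdorff space $K$.

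For (iii), with $X$ compact, we would use that $d^*$ induces the compact-open topology on $C(X,S)$. It is invariant because $G$ acts on $X$ by homeomorphisms, which gives $d^*(g\cdot p,g\cdot q)=d^*(p,q)$. The metric semilattice inequality is immediate pointwise: $d(p(x)q(x),p'(x)q'(x))\le\max\{d^*(p,p'),d^*(q,q')\}$, and we then take the supremum over $x\in X$. Combined with (i) and (ii), this gives the claim.
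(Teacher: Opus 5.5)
Your proposal is correct. Parts (ii) and (iii) are essentially the paper's argument. In (ii) the paper simply cites Dugundji for the fact that the sets $[K,U]$, with $K$ compact and $U$ running over a basis of open subsemilattices of $S$, form a subbasis of the compact-open topology; your shrinking argument inside the compact Hausdorff set $K$ is exactly a proof of that fact. In (iii) the paper makes the same pointwise estimate and then takes the supremum.

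The genuine difference is in the continuity of the multiplication in (i). The paper verifies directly that $\{(p,q)\mid p\cdot q\in[K,W]\}$ is open. For each $k\in K$ it picks open $U_k,V_k$ with $U_kV_k\subset W$. Using local compactness of $X$, it then picks a neighborhood $O_k$ of $k$ with compact closure that $p\times q$ maps into $U_k\times V_k$, and passes to a finite subcover of $K$. You instead factor the multiplication as $C(X,S)\times C(X,S)\cong C(X,S\times S)\to C(X,S)$, $h\mapsto\mu\circ h$.

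Your route is more conceptual. It makes clear that local compactness of $X$ plays no role in the continuity of the multiplication; it is needed only for the continuity of the $G$-action, as recalled in Section~\ref{sec2}. The price is that the real content moves into the homeomorphism $C(X,S)\times C(X,S)\cong C(X,S\times S)$. Its nontrivial half, the continuity of $(p,q)\mapsto p\times q$, is proved by the same finite-cover argument with compact neighborhoods inside $K$ that the paper carries out by hand. That argument relies on compact subsets of $X$ being Hausdorff, hence regular, which the paper's standing assumption guarantees.
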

	
	\begin{proof}
		
		(i) First, let us show that the multiplication is continuous. To this end, it suffices to prove that the set 
		\begin{center}
			$Z=\{(p,q)\in C(X,S)\times C(X,S)\mid p\cdot q\in [K,W] \}$
		\end{center}
		is open for any compact subset $K\subset X$ and any open subset $W\subset S$. 
		
		Let $(p,q)\in Z$. Then for each $k\in K$ there exist open subsets $U_k$ and $V_k$ of $S$ such that $\big(p(k),q(k)\big)\in U_k\times V_k$ and $U_k V_k\subset W$. 
		
		Now define a continuous map $p\times q:X\rightarrow S\times S$ by $(p\times q)(x)=\big(p(x),q(x)\big)$. Since $X$ is locally compact, there is a neighborhood $O_k$ of $k$ in $X$ such that $\overline{O_k}$ is compact and $(p\times q)(\overline{O_k})\subset U_k\times V_k$. 
		
		By the compactness of $K$, there exist $n\in\mathbb{N}$ and points $k_1,\dots,k_n\in K$ such that $K\subset \bigcup\limits_{i=1}^nO_{k_i}$. Observe that
		
		\begin{center}
			$p\in U= \bigcap\limits_{i=1}^n[\overline{O_{k_i}},U_{k_i}]\quad$ and $\quad q\in  V= \bigcap\limits_{i=1}^n[\overline{O_{k_i}},V_{k_i}]$.
		\end{center}
		
		Moreover, $U\times V\subset Z$. Indeed, given $(m,n)\in U\times V$ and $k\in K$, there exists $j\in\{1,\dots,n\}$ such that $k\in O_{k_j}$. Consequently,
		
		\begin{center}
			$(m\cdot n)(k)=m(k) n(k)\in U_{k_j} V_{k_j}\subset W$,
		\end{center}
		which shows that $m\cdot n\in [K,W]$. Therefore, $Z$ is open and $(C(X,S),\cdot)$ is a topological semilattice.
		
		On the other hand, let $p,q\in C(X,S)$ and $g\in G$. Then, for each $x\in X$, we have:
		
		\begin{center}
			$\big(g(p\cdot q)\big)(x)=(p\cdot q)(g^{-1} x)=p(g^{-1} x)q(g^{-1} x)=(gp\cdot gq)(x)$.
		\end{center}
		
		This implies that $g(p\cdot q)=(gp)\cdot (gq)$, proving that the operation is equivariant. Hence, $C(X,S)$ is a topological $G$-semilattice.
		
		(ii) Let $\mathcal{A}$ be an open basis for the topology of $S$ consisting of subsemilattices.  According to \cite[Chapter XXII, Proposition 5.1]{JD2}, a subbasis for the compact-open topology is given by 
		$$
			\mathcal{B}=\{[K,U]\mid K\subset X\ \mbox{compact\ and}\ U\in \mathcal{A}\}.
		$$
		
		Note that $[K,U]$ is a subsemilattice for each compact subset $K\subset X$ and each $U\in \mathcal{A}$. Thus, $C(X,S)$ is a Lawson $G$-semilattice.

		(iii) Let $p,q,m,n \in C(X,S)$. We can suppose that $d^*(q,n)\leq d^*(p,m)$. Then, for each $x\in X$, we have
		
		\begin{center}
			$d\big(p(x) q(x), m(x) n(x)\big)\leq \max\{d\big(p(x),m(x)\big), d\big(q(x), n(x)\big)\}\leq d^*(p,m)$.
		\end{center}
		
		This implies that
		\begin{center}
			$d^*(p\cdot q, m\cdot n)=\sup\{d\big(p(x)q(x), m(x) n(x)\big)\mid x\in X\}\leq d^*(p,m)$.
		\end{center}
		Therefore, $d^*(p\cdot q, m\cdot n)\leq \max\{d^*(p,m),d^*(q,n)\}$ and $(C(X,S),d^*,\cdot)$ is a Lawson metric $G$-semilattice. This completes the proof.
	\end{proof}

	Another interesting example of a Lawson metric $G$-semilattice  is the hyperspace $$2^X=\{A\subset X\mid A\ \rm{is\ compact\ and\ nonempty}\}$$ of a metric $G$-space $(X,d).$ Here, $2^X$ is equipped with the action of $G$ given by $g\cdot A=\{ga\mid a\in A\}$ for each $g\in G$ and $A\in 2^X$. Furthermore, it is endowed with the Hausdorff metric $d_H:2^X\times 2^X\rightarrow [0,\infty)$ defined by
	\begin{center}
		$d_H(A,B)=\max \big\{\sup \{d(a,B)\mid a\in A\},\ \sup\{d(b,A)\mid b\in B\}\big\}$
	\end{center} 
	for each $A, B\in 2^X$.
	
	Notice that 
	$$\cup: 2^X\times 2^X\rightarrow 2^X,\quad (A,B)\mapsto A\cup B$$
	 is a $G$-map and $(2^X,d_H,\cup)$ is a Lawson metric $G$-semilattice. 
	 
	 Moreover, since $F(X)=\{A\in 2^X\mid A\ \mbox{is finite}\}$ is an invariant subsemilattice of $2^X$,  it follows that $\big(F(X),d_H,\cup\big)$ is also a Lawson metric $G$-semilattice.
	
	Recall that a metric space $(X,d)$ is called \textit{uniformly locally path-connected} (see \cite{Banakh1}, \cite{KS1} and \cite{MYaguchi}), if for each $\varepsilon>0$, there exists $\delta>0$ such that for any $a, b\in X$ with $d(a,b)<\delta$, there exists a path $f: I\rightarrow X$ such that $\mbox{diam}\, f(I)<\varepsilon$, $f(0)=a$ and $f(1)=b$.
	
	In Theorem \ref{SakaiGfinito}, we extend Theorem \ref{Sakai2005Teo3.4} to the setting of $G$-spaces in two cases: either $G$ is finite, or both $G$ and $S$ are compact. For the proof, we require the following result.
	
	\begin{lemma}[\cite{A10}]\label{hyperauto}
		\rm{If $(X,d)$ is a uniformly locally path-connected metric $G$-space, then $2^X$ is a $G$-UANR. Moreover, if $X$ is also connected, then $2^X$ is a $G$-UAR.}
	\end{lemma}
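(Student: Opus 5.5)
The statement to prove is Lemma~\ref{hyperauto}: if $(X,d)$ is a uniformly locally path-connected metric $G$-space, then $2^X$ is a $G$-UANR, and it is a $G$-UAR when $X$ is also connected. I would reduce both parts to the non-equivariant case (Sakai--Yaguchi: $2^X$ is a UANR, resp.\ a UAR, under these hypotheses) and then upgrade to the equivariant setting using the semilattice structure $\cup$, which is $G$-equivariant.

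\textbf{Step 1: a $G$-homotopy dense subset.} Fix $0<\varepsilon$ and let $\delta(\varepsilon)$ be the modulus of uniform local path-connectedness. I would first show that $F(X)$ is $G$-homotopy dense in $2^X$. For $X$ compact this follows from a Curtis-type argument. In general I would instead use the standard construction $A\mapsto \overline{B}(A,t)$ "thickened by path-images"; this is $G$-equivariant since $d$ is invariant. It lands in compact sets only when closed balls are compact, so this may be awkward.

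\textbf{Step 2 (preferred route): verify the $G$-UANE property directly.} Let $B\subset Z$ be closed in a metric $G$-space and $f\colon B\to 2^X$ a uniformly continuous $G$-map. Take the non-equivariant extension $F\colon U\to 2^X$ uniformly continuous at $B$, which exists by the non-equivariant theorem. Then set
\[
\widehat F(z)=\bigcup_{g\in G} g^{-1}F(gz).
\]
\begin{itemize}
\item Equivariance is immediate from the definition.
\item The union is compact, since it is the image of the compact set $\{(g,x): x\in F(gz)\}$; this set is compact because $g\mapsto F(gz)$ is continuous into $2^X$.
\item $\widehat F$ extends $f$, since $g^{-1}f(gb)=f(b)$.
\item Uniform continuity at $B$: if $d(z,b)<\eta$ then $d(gz,gb)<\eta$ for all $g$, so $d_H(F(gz),f(gb))<\varepsilon$. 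Since $\cup$ is $1$-Lipschitz in $d_H$ (this extends to arbitrary compact unions via the sup-formula), we get $d_H(\widehat F(z),f(b))\le\varepsilon$.
\end{itemize}
Continuity of $\widehat F$ off $B$ follows the same way, using compactness of $G$ to get uniformity in $g$. The domain is handled by taking $U$ to be a uniform neighborhood and replacing it with the invariant $\bigcap_g gU\supset B(B,\delta)$, which is still uniform since the metric is invariant.

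\textbf{Step 3 (AR case).} If $X$ is connected, the non-equivariant $F$ can be taken with $U=Z$, and the same averaging gives a $G$-UAE, hence a $G$-UAR. The main obstacle is continuity of $\widehat F$, where the union over a compact group must vary continuously. I would handle this with the estimate $d_H\bigl(\bigcup_g A_g,\bigcup_g A'_g\bigr)\le\sup_g d_H(A_g,A'_g)$ together with equicontinuity of $g\mapsto F(gz)$ near each point, obtained from compactness of $G$ and the continuity of $G\times U\to 2^X$.
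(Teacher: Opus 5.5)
The paper does not prove this lemma; it imports it from \cite{A10}. So the relevant comparison is with the closely related argument in the proof of Theorem~\ref{SakaiGfinito}.

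Your Steps~2--3 are correct. You should state compactness of $G$ explicitly as a hypothesis, since you use it (as \cite{A10} does). With that in place:
\begin{itemize}
\item $\widehat F(z)=\bigcup_{g\in G}g^{-1}F(gz)$ is the union of the compact family $\{g^{-1}F(gz)\mid g\in G\}\subset 2^X$, hence compact.
\item $\widehat F$ is equivariant and extends $f$.
\item The estimate $d_H\bigl(\bigcup_g A_g,\bigcup_g A'_g\bigr)\le\sup_g d_H(A_g,A'_g)$ gives continuity, once you apply compactness of $G$ to the continuous map $(g,z)\mapsto g^{-1}F(gz)$ on $G\times U$.
\item The same estimate, with invariance of $d$, gives uniform continuity at $B$.
\item $B(B,\delta)$ is itself an invariant uniform neighborhood, so the domain causes no trouble.
\end{itemize}
Since $G$-UA(N)E $\Rightarrow$ $G$-UA(N)R is the trivial direction (extend the identity), you are done. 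This is modulo the non-equivariant inputs you quote: Michael's UANR $=$ UANE, and the results that $2^X$ is a UANR, resp.\ a UAR, for uniformly locally path-connected, resp.\ also connected, $X$. The latter are due to Kurihara--Sakai--Yaguchi; for the AR part, use Curtis together with AR $+$ UANR $=$ UAR.

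Your construction is the continuous analogue of the paper's proof of Theorem~\ref{SakaiGfinito}. There one forms the finite set $K(u)=\{g^{-1}k(gu)\mid g\in G\}$ and collapses it with the product map $r\colon F(S)\to S$. A general Lawson semilattice only admits finite products, which forces $G$ finite, or $S$ compact so that $\inf$ is available. In $2^X$ the collapsing map $\mathcal A\mapsto\bigcup\mathcal A$ is defined on all compact families and is $1$-Lipschitz for the Hausdorff metric. That is exactly why your union-averaging works for an arbitrary compact group.

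Step~1 plays no role in your argument and should be dropped; as you yourself note, the closed thickenings would not in general be compact.
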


	\begin{theorem}\label{SakaiGfinito}
		\rm{Let $G$ be a finite group and $(S,d,\cdot)$ a Lawson metric $G$-semilattice. Then the following statements are equivalent:
			\begin{enumerate}
				\item [(i)] $S$ is a $G$-UANR,
				\item [(ii)] $S$ is a weak $G$-UANR,
				\item [(iii)] $S$ is $G$-ULC,
				\item [(iv)] $S$ is uniformly locally path-connected.
			\end{enumerate}
			
			Moreover, the equivalence also holds if both $G$ and $S$ are compact.}
		
	\end{theorem}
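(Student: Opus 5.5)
The chain (i)$\Rightarrow$(ii)$\Rightarrow$(iii)$\Rightarrow$(iv)$\Rightarrow$(i) is the natural route.

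(i)$\Rightarrow$(ii) is Theorem~\ref{lemautil}(ii), which applies since finite groups are compact. For (ii)$\Rightarrow$(iii), a weak $G$-UANR is by definition $G$-ULC. For (iii)$\Rightarrow$(iv), take $x=a$ and $d(a,b)<\delta$. Then $b\in B_d(a,\delta)$, and this ball is $G_a$-contractible in $B_d(a,\varepsilon)$ via some homotopy $H$ ending at a constant $c$. Concatenating $t\mapsto H(a,t)$ with the reverse of $t\mapsto H(b,t)$ gives a path from $a$ to $b$ inside $B_d(a,\varepsilon)$, of diameter at most $2\varepsilon$. Rescaling $\varepsilon$ gives uniform local path-connectedness. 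So the only real content is (iv)$\Rightarrow$(i).

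For (iv)$\Rightarrow$(i), Theorem~\ref{Sakai2005Teo3.4} (forgetting the action) shows that $S$ is a UANR, in particular an ANR. The task is to upgrade this to $G$-UANR, i.e.\ to $G$-UANE. Take a metric $G$-space $Z$, a closed $G$-subset $A$, and a uniformly continuous $G$-map $f:A\to S$. Since $S$ is a UANE, there is an extension $F:U\to S$ on a uniform neighborhood $U$ of $A$, uniformly continuous at $A$. Since $d$ and the metric on $Z$ are invariant, $U'=\bigcap_{g\in G}gU$ still contains $B(A,\delta)$ when $U\supset B(A,\delta)$, so $U'$ is a uniform $G$-neighborhood. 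For finite $G=\{g_1,\dots,g_n\}$, I would then average using the semilattice operation:
\[
\widetilde F(z)=\prod_{i=1}^n g_i^{-1}F(g_iz),\qquad z\in U'.
\]
Commutativity and associativity make this well defined. Equivariance follows from $g(xy)=(gx)(gy)$ together with the fact that multiplication by $g$ permutes $G$. For $a\in A$, each factor equals $g_i^{-1}f(g_ia)=f(a)$, and idempotency gives $\widetilde F(a)=f(a)$. Uniform continuity at $A$ follows by iterating $d(ab,a'b')\le\max\{d(a,a'),d(b,b')\}$: $d(\widetilde F(z),f(a))\le\max_i d(F(g_iz),F(g_ia))$, and $d(g_iz,g_ia)=d(z,a)$, so the same $\delta$ works. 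Hence $S$ is a $G$-UANE, equivalently a $G$-UANR. The heart of the argument is that the operation substitutes for the finite averaging used in Proposition~\ref{ConvexARpairfi}.

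For compact $G$ and compact $S$, averaging over infinite $G$ is not available as a finite product, and this is the step I expect to be the main obstacle. My plan is to use the infinite product instead. Every $B\in 2^S$ has an infimum $\inf B$, a limit of finite products, and for a compact Lawson semilattice the map $\inf:2^S\to S$ is continuous and satisfies $d(\inf B,\inf B')\le d_H(B,B')$, by approximating $B$ and $B'$ by finite sets and using the max-inequality. Given the extension $F$ as above, I would set $\widetilde F(z)=\inf\{g^{-1}F(gz)\mid g\in G\}$. The set inside is compact, being a continuous image of $G$, and varies continuously in $z$ in $d_H$. The same estimates as in the finite case then give equivariance, $\widetilde F\restriction_A=f$, and uniform continuity at $A$. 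Alternatively, one could realize $S$ as a $G$-retract of $2^S$ via $\inf$ and combine Lemma~\ref{hyperauto} with Lemma~\ref{retractuniform}. The delicate points are the continuity and $1$-Lipschitz property of $\inf$, which is where compactness of $S$ is genuinely needed.
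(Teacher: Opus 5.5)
Your proof is correct. The chain of implications and the finite-group case are the paper's argument: your $\widetilde F(z)=\prod_i g_i^{-1}F(g_iz)$ is exactly the paper's $r\circ K$, with $K(u)=\{g^{-1}k(gu)\mid g\in G\}\in F(S)$ and $r$ multiplying out a finite set. You also spell out (iii)$\Rightarrow$(iv), which the paper leaves implicit.

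For compact $G$ and $S$, your ``alternative'' is precisely the paper's route. Lemma~\ref{hyperauto} makes $2^S$ a $G$-UANR. The map $\inf$ from \cite{MMC} is continuous, hence uniformly continuous since $2^S$ is compact, and satisfies $\inf(gB)=g\inf(B)$, so it makes $S$ a uniform $G$-retract of $2^S$. Lemma~\ref{retractuniform} then concludes.

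Your primary plan is genuinely different. You take a nonequivariant extension from Theorem~\ref{Sakai2005Teo3.4} and symmetrize it by $\widetilde F(z)=\inf\{g^{-1}F(gz)\mid g\in G\}$, i.e.\ the finite-group construction with $F(S)$ replaced by $2^S$. This handles both cases by one mechanism and swaps the hyperspace theorem of \cite{A10} for the nonequivariant Theorem~\ref{Sakai2005Teo3.4}. The price is that you must check by hand what the paper black-boxes. First, $z\mapsto\{g^{-1}F(gz)\mid g\in G\}$ must be $d_H$-continuous; this follows from $d_H(K(z),K(w))\le\sup_{g}d(F(gz),F(gw))$ and a tube-lemma argument over the compact $G$. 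Second, you need $d(\inf B,s)\le\sup_{b\in B}d(b,s)$. Both are true.

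One correction to your last sentence: compactness of $S$ is not really what either route needs. The finite-product map $F(S)\to S$ is $1$-Lipschitz for $d_H$. To see this, match each point of one set with a nearest point of the other and pad both lists accordingly; idempotency leaves the products unchanged, and the max-inequality gives the bound. Since $F(S)$ is dense in $2^S$, if $S$ is merely complete this map extends to a $1$-Lipschitz $G$-map $2^S\to S$ fixing singletons. That is all either argument actually uses about $\inf$.
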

	
	\begin{proof}
		By Theorem \ref{lemautil}, it suffices to verify the implication (iv) $\Rightarrow$ (i). To this end, suppose that $S$ is uniformly locally path-connected. By Theorem \ref{Sakai2005Teo3.4}, $S$ is also a UANR. We shall prove that $S$ is a $G$-UANR.
		
		Let $A$ be a closed $G$-subset of a metric $G$-space $(X,\rho)$, and let $j:A\rightarrow S$ be a uniformly continuous $G$-map. Since $S$ is a UANR, there exist a uniform neighborhood $U$ of $A$ in $X$ and a continuous map $k: U\rightarrow S$ that is uniformly continuous at $A$. We may assume that $U$ is invariant.
		
		Now consider the $G$-space $(F(S),d_H)$ and the map $K:U\rightarrow F(S)$ defined by $K(u)=\{g^{-1} k(gu)\mid g\in G\}$ for each $u\in U$.
		
		\smallskip
		
		\noindent\textit{Claim 1.} $K$ is a $G$-map and is uniformly continuous at $A$.
		
		First we prove that  $K$ is continuous at every point $u_0\in U$.	Let $\varepsilon>0$. For each $g\in G$, there exists $\delta_g>0$ such that $d(k(gu),k(gu_0))<\varepsilon$ whenever $u \in U$ satisfies $\rho(gu,gu_0)<\delta_g$. Choose $0<\delta<\text{min}\{\delta_g\mid g\in G\}$. Then, for any $u\in U$ with $\rho(u,u_0)<\delta$, we have $\rho(gu, gu_0) < \delta_g$ for all $g \in G$. Hence, 
		\begin{center}
			$d(g^{-1} k(gu),g^{-1} k(gu_0))=d(k(gu),k(gu_0))<\varepsilon$,
		\end{center}
		which shows that $d_H(K(u),K(u_0))<\varepsilon$. Therefore, $K$ is continuous. Moreover, it follows directly that $K$ is equivariant.
		
		To show that $K$ is uniformly continuous at $A$, let $\varepsilon>0$ and choose $\delta>0$ such that $d(k(u),k(a))<\varepsilon$ for each $u\in U$ and $a\in A$ with $\rho(u,a)<\delta$. Observe that, for each $g\in G$,
		\begin{center}
			$d(g^{-1} k(gu),k(a))=d(g^{-1} k(gu),g^{-1} k(ga))=d(k(gu),k(ga))<\varepsilon$.
		\end{center}
		
		It follows that 
		$$d_H(K(u),K(a))=d_H(\{g^{-1} k(gu)\mid g\in G\},\{k(a)\})<\varepsilon.$$
		 Thus, $K$ is uniformly continuous at $A$ and  this completes the proof of Claim 1.
		
		\smallskip
		
		On the other hand, consider the map $r: F(S)\rightarrow S$ defined by $r(\{s_1,\dots,s_n\})=s_1\cdots s_n$. Notice that $r$ is a continuous retraction. In fact, it is clear that $r$ is equivariant.
		
		\smallskip
		
		\noindent\textit{Claim 2.} $r$ is a  uniformly continuous      $G$-retraction.   
		
		Indeed, let $\{a_1,\dots,a_m\}, \{b_1,\dots,b_n\}\in F(S)$. We may assume that $m=n$. Therefore
		\begin{align*}
			d\big(r(\{a_1,\dots,a_m\}),r(\{b_1,\dots,b_m\})\big)& =d(a_1\cdots a_m,b_1\cdots b_m)\\
			& \leq \text{max}\, \{d(a_i,b_i)\mid i\in \{1,\dots,m\}\}\\
			& \leq d_H(\{a_1,\dots,a_m\},\{b_1,\dots,b_m\}).
		\end{align*} 
		
		Then $r$ is uniformly continuous and this proves the Claim.
		
		Finally, consider the $G$-map $J=r\circ K: U\rightarrow S$. Observe that $J$ is uniformly continuous at $A$. Moreover, for each $a\in A$, we have
		
		\begin{center}
			$J(a)=r(\{g^{-1} k(ga)\mid g\in G\})=r(\{j(a)\})=j(a)$
		\end{center}
		which shows that $J$ is an extension of $j$. This establishes the implication  (iv) $\Rightarrow$ (i).
		
		Next, we consider the case where both $G$ and $S$ are compact. We prove the implication  (iv) $\Rightarrow$ (i). 
		
		Since $S$ is uniformly locally path-connected, by Lemma \ref{hyperauto} we get that $2^S$ is a $G$-UANR. Therefore, it suffices to prove that $S$ is a uniform neighborhood $G$-retract of $2^S$.
		
		By the compactness of $S$, it follows from \cite{MMC} that the map $r: 2^S\rightarrow S$ defined by $r(A)=\mbox{inf}(A)$  is a continuous retraction. Moreover, the map $r$ is uniformly continuous at $S$.  Additionally, since the multiplication in $S$ is equivariant, for each $A\in 2^S$ and $g\in G$ we have $r(gA)=\mbox{inf}(gA)=g\, \mbox{inf}(A)$, that is, $r$ is a $G$-map. Hence, $S$ is a uniform $G$-retract of $2^S$. This completes the proof.

	\end{proof}
	
	When $G$ is a compact group and $S$ is an arbitrary Lawson metric $G$-semilattice, the previous result has not yet been proved. However, there exist Lawson metric $G$-semilattices for which the result does hold. One such example is given by the hyperspace $2^X$ of a metric $G$-space $X$, where $G$ is a compact group (see \cite{A10}). In Proposition \ref{EquivaC(X,S)equivariant}, we present another example as a consequence of Theorem \ref{functionC(X,Y)GUANR}.

	\begin{proposition} \label{EquivaC(X,S)equivariant}
		\rm{Let $G$ be a compact group, $X$ a compact $G$-space and $(S,d,\cdot)$ a Lawson metric semilattice. Then the following statements are equivalent for the $G$-space $(C(X,S),d^*)$:
			
			\begin{enumerate}
				\item [(i)] $C(X,S)$ is a $G$-UANR,
				\item [(ii)] $C(X,S)$ is a weak $G$-UANR,
				\item [(iii)] $C(X,S)$ is $G$-ULC,
				\item [(iv)] $C(X,S)$ is uniformly locally path-connected.
		\end{enumerate}}
		
	\end{proposition}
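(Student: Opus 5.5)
By Theorem \ref{lemautil}, (i) implies (ii), and (ii) implies (iii) by the definition of a weak $G$-UANR. For (iii) $\Rightarrow$ (iv), we use that $G$-ULC implies ULC. Indeed, $B(x,\delta)$ is $G_x$-contractible in $B(x,\varepsilon)$, hence contractible there. Any two points $p,q$ with $d^*(p,q)<\delta$ both lie in $B(p,\delta)$, so the contraction yields a path joining them inside $B(p,\varepsilon)$. That path has diameter at most $2\varepsilon$. So the real content is (iv) $\Rightarrow$ (i), and there we route everything through Theorem \ref{functionC(X,Y)GUANR} and Theorem \ref{Sakai2005Teo3.4}.

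Assume $C(X,S)$ is uniformly locally path-connected. The plan is to show that $S$ is uniformly locally path-connected. Then Theorem \ref{Sakai2005Teo3.4}, applied to the Lawson metric semilattice $(S,d,\cdot)$, gives that $S$ is a UANR, and Theorem \ref{functionC(X,Y)GUANR} gives that $(C(X,S),d^*)$ is a $G$-UANR. (We could equally apply Theorem \ref{Sakai2005Teo3.4} to $C(X,S)$ directly, since it is a Lawson metric semilattice by Proposition \ref{FuncspacesLawonmetric}(iii). But that only yields UANR, not $G$-UANR, which is why we go through $S$.)

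To pass the property from $C(X,S)$ down to $S$, we use the embedding of $S$ into $C(X,S)$ as constant maps, $c:S\to C(X,S)$, $c(s)(x)=s$. This is an isometry for $d^*$ (assume $X\neq\emptyset$). It has the retraction $\mathrm{ev}_{x_0}:C(X,S)\to S$ for a fixed $x_0\in X$, and $\mathrm{ev}_{x_0}$ is $1$-Lipschitz: $d(p(x_0),q(x_0))\le d^*(p,q)$.

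Given $\varepsilon>0$, take the $\delta$ from uniform local path-connectedness of $C(X,S)$. If $d(a,b)<\delta$, then $d^*(c(a),c(b))<\delta$, so there is a path $f$ in $C(X,S)$ from $c(a)$ to $c(b)$ with diameter less than $\varepsilon$. Then $\mathrm{ev}_{x_0}\circ f$ is a path in $S$ from $a$ to $b$, again of diameter less than $\varepsilon$. Hence $S$ is uniformly locally path-connected.

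The only point needing care is continuity of $\mathrm{ev}_{x_0}$ and of paths in $C(X,S)$ with respect to $d^*$. Since $X$ is compact, the $d^*$-topology coincides with the compact-open topology, and evaluation is Lipschitz in any case. So no step should be a genuine obstacle; the main work is the chaining of the two earlier theorems. The mildest subtlety is the $2\varepsilon$ bound in (iii) $\Rightarrow$ (iv), which is harmless after rescaling $\varepsilon$.
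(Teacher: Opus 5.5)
Your proposal is correct and follows the paper's proof. The paper likewise reduces everything to (iv) $\Rightarrow$ (i), transfers uniform local path-connectedness from $C(X,S)$ to $S$ by joining the constant maps with a small path and evaluating at a fixed $x_0\in X$, and then applies Theorem~\ref{Sakai2005Teo3.4} to $S$ followed by Theorem~\ref{functionC(X,Y)GUANR}. Your explicit argument for (i) $\Rightarrow$ (ii) $\Rightarrow$ (iii) $\Rightarrow$ (iv), which the paper leaves implicit, is also sound.
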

	
	\begin{proof}
		It is enough to verify the implication (iv) $\Rightarrow$ (i). 
		
		Suppose that $C(X,S)$ is uniformly locally path-connected. We claim that $S$ is also uniformly locally path-connected. 
		
		Indeed, let $\varepsilon>0$ be given. Then there exists $\delta>0$ such that for any $p,q\in C(X,S)$ with $d^*(p,q)<\delta$, there is a path $\nu: \I\rightarrow C(X,S)$ with $\nu(0)=p$, $\nu(1)=q$ and $\text{diam}\, \nu(\I)<\frac{\varepsilon}{2}$. 
		
		Now, take $a,b\in S$ with $d(a,b)<\delta$, and define the constant maps $\iota_a,\iota_b:X\rightarrow S$ by $\iota_a(x)=a$ and $\iota_b(x)=b$ for all $x\in X$. Then there exists a path $\nu:\I\rightarrow C(X,S)$ from $\iota_a$ to $\iota_b$ so that $\text{diam}\, \nu(\I)<\frac{\varepsilon}{2}$. 
		
		Fix $x_0\in X$ and consider the map $\tau: \I\rightarrow S$ defined by $\tau(t)=\nu(t)(x_0)$. Then $\tau$ is a path from $a$ to $b$ such that $\text{diam}\, \tau(\I)<\varepsilon$. This shows that $S$ is uniformly locally path-connected. Hence, by Theorem \ref{Sakai2005Teo3.4}, $S$ is a UANR, and by Theorem \ref{functionC(X,Y)GUANR}, $C(X,S)$ is a $G$-UANR. This completes the proof.
	\end{proof}

	\bibliographystyle{amsplain}

\end{document}